\title[The VPN Tree Routing Conjecture for Outerplanar Networks]{The VPN Tree Routing Conjecture for Outerplanar Networks%
\\{\normalsize (Extended abstract)}%
}
\author{Samuel Fiorini}
\author{Gianpaolo Oriolo}%
\author{Laura Sanit\`a}
\address{Gianpaolo Oriolo \& Laura Sanit\`a: Dipartimento di Ingegneria dell'Impresa, Universit\`a di Roma ``Tor Vergata'', Rome, Italy.}
\email{oriolo@disp.uniroma2.it}
\email{laurasanita@gmail.com}
\author{Dirk Oliver Theis}
\address{Samuel Fiorini \& Dirk Oliver Theis: %
  Service de G\'eometrie Combinatoire et Th\'eorie des Groupes, %
  Department of Mathematics, %
  Universit\'e Libre de Bruxelles, %
  Brussels, Belgium}%
\email{sfiorini@ulb.ac.be}
\email{theis@uni-heidelberg.de}%
\thanks{Dirk Oliver Theis supported by \textit{Communaut\'e fran\c caise de Belgique -- Actions de Recherche Concert\'ees.}}
\date{Fri Nov 16 14:48:14 CET 2007}
\newtheorem{theorem}{Theorem}
\newtheorem{prop}[theorem]{Proposition}
\newtheorem{lemma}[theorem]{Lemma}
\theoremstyle{remark}
\newtheorem*{remark*}{Remark}
\newcommand{\RR}{\mathbb R}
\begin{document}

\maketitle

\begin{abstract}
The VPN Tree Routing Conjecture is a conjecture about 
the Virtual Private Network Design problem. It states 
that the symmetric version of the problem always has 
an optimum solution which has a tree-like structure.
In recent work, Hurkens, Keijsper and Stougie
(\textit{Proc.\ IPCO XI,} 2005; \textit{SIAM J.\ Discrete Math.,} 2007)
have shown that the conjecture holds when the
network is a ring. A shorter proof of the VPN Conjecture 
for rings was found a few months ago by Grandoni, Kaibel, 
Oriolo and Skutella
(to appear in \textit{Oper. Res. Lett.,} 2008). 
In their paper, Grandoni 
\emph{et al.} introduce another conjecture, called the 
Pyramidal Routing Conjecture (or simply PR Conjecture), 
which implies the VPN Conjecture. Here we consider a 
strengthened version of the PR Conjecture. First we 
establish several general tools which can be applied in
arbitrary networks. Then we use them
to prove that outerplanar networks satisfy the PR Conjecture. 
\end{abstract}

\section{Introduction}

The symmetric Virtual Private Network Design problem (sVPND)
takes place in an undirected network. Inside the network there 
are $k$ distinguished vertices called {\em terminals}. The goal 
of the problem is to choose a collection of ${k \choose 2}$ paths, 
one between each pair of terminals, and capacity reservations on 
the edges covered by these paths, in such a way that any admissible 
traffic demand between the terminals can be routed through the paths
and that the cost of the reservation is minimum (a precise definition
is given in the next paragraph). So far the question of determining 
the complexity of sVPND has remained open, see Erlebach and R\"uegg
\cite{ER04} and Italiano, Leonardi and Oriolo \cite{ILO06}. The 
\emph{VPN Tree Routing Conjecture\/} (or shortly, \emph{VPN Conjecture\/}) 
states that sVPND always has an optimum solution whose paths determine 
a tree. As shown by Gupta, Kleinberg, Kumar, Rastogi and Yener 
\cite{GKKRY01}, if the VPN Conjecture is true then sVPND can be 
solved in polynomial time by a single all-pairs shortest paths 
computation. Essentially, the only general class of networks where 
the VPN Conjecture is known to hold is the class of ring networks
(that is, whose underlying graph is a cycle), a result due to 
Hurkens, Keijsper and Stougie \cite{HKS05,HKS07}. (Actually, 
Hurkens {\em et al.} prove the VPN Conjecture in other cases
too, e.g., when the network is complete graph of size four.)
A short proof of the VPN Conjecture for ring networks was very
recently found by Grandoni, Kaibel, Oriolo and Skutella 
\cite{GKOS08}. A consequence of our results is that the 
VPN Conjecture holds for all outerplanar networks.

Let $G = (V,E)$ be a undirected, finite, simple graph representing
the network. Each edge $e$ of $G$ has a given cost $c_e \in 
\mathbb{R}_+$. Let $W \subseteq V$ denote the set of all terminals,
thus $|W| = k$. For each terminal $u \in W$ we are given an upper
bound $b_u \in \mathbb{Z}_+ $ on the total amount of traffic that 
$u$ can send or receive. The traffic demands between terminals
are encoded in a {\em traffic matrix\/}, that is, a non-negative 
$k \times k$ real matrix $D = (d_{uv})$ with lines and columns 
indexed by the set of all terminals which is symmetric and has 
zeroes on the diagonal. We say that a traffic matrix $D$ is 
{\em admissible\/} if we have $\sum_{v \in W} d_{uv} \le b_u$ for 
all terminals $u \in W$. Now the {\em symmetric Virtual Private 
Network Design problem\/} ({\em sVPND\/}) is to choose a simple
$u$--$v$ path $P_{uv}$ for each pair of distinct terminals $u, v 
\in W$ together with {\em capacity reservations\/} $y_e \in \mathbb{R}_+$ 
for all edges $e \in E$ in such a way that every admissible demand 
matrix $D = (d_{uv})$ can be routed, that is, $y_e \ge \sum (d_{uv} : 
u, v \in W \textrm{ and } e \in P_{uv})$, and the cost of the capacity
reservation $c^T y = \sum_{e \in E} c_e \, y_e$ is minimum.

Grandoni \emph{et al.} \cite{GKOS08} introduced a new problem related
to the symmetric Virtual Private Network Design problem which they
call Pyramidal Routing problem (PR). In this paper,
we consider the following version of their problem. As before, the 
network is given as a (undirected, finite, simple) graph $G = (V,E)$.
We will always assume that $G$ is connected. 
This time $G$ has a special vertex $r$ called the {\em root}. Each 
vertex $v$ has a certain {\em demand\/} $b_v \in \mathbb{Z}_+$. A 
vertex $v$ with $b_v > 0$ is referred to as a {\em terminal}. We 
always assume that the root is a terminal, i.e., we have $b_r \ge 1$.
Let $k$ be the integer defined as
$$
k := \sum_{v \in V} b_v.
$$
So $k$ is simply the total demand. A {\em routing} is a 
collection $\mathcal{P}$ of simple paths (repetitions are allowed) 
such that (i) all paths in $\mathcal{P}$ start at vertex $r$; (ii) 
for each vertex $v$ exactly $b_v$ paths of $\mathcal{P}$ 
end in $v$. In particular, any routing $\mathcal{P}$ contains 
$b_r$ trivial paths starting and ending at the root. 
The collection of paths $\mathcal{P}$ determine two vectors in 
$\mathbb{Z}^E$: the {\em $n$-vector} $n = n(\mathcal{P})$ and 
the {\em $y$-vector} $y = y(\mathcal{P})$ which are respectively 
defined as
\begin{eqnarray*}
n_e &:=& |\{P : e \in P \in \mathcal{P}\}|, \quad \mathrm{and}\\
y_e &:=& \min \{n_e,k-n_e\}.
\end{eqnarray*}
In other words $n_e$ gives the number of paths of $\mathcal{P}$
containing edge $e$ and $y_e = p(n_e)$ where $p : x \mapsto 
\min \{x,k-x\}$ is the so-called {\em pyramidal function}. Let 
$c_e \in \mathbb{R}_+$ denote the cost of edge $e \in E$. The 
cost of a routing is the total cost of its $y$-vector, that is, 
$c^T y = \sum_{e \in E} c_e y_e$. The {\em Pyramidal Routing 
problem} ({\em PR\/}) is to find a routing whose cost is minimum. 
As mentioned above, our version of PR is slightly more general 
than the one of Grandoni \emph{et al.} \cite{GKOS08}. The original
version is obtained by restricting $b_v$ to be $0$ or $1$ for all
vertices $v$. This is not a severe restriction because a demand
$b > 1$ at some vertex $u$ can be simulated, for instance, by 
adding $b$ pendant edges $uv_1$, \ldots, $uv_b$ with cost zero 
to the graph and letting $b_u = 0$ and $b_{v_i} = 1$ for $i = 1, \ldots, b$.

\medskip

\noindent \textbf{Conjecture} {\em (PR Conjecture). For any 
instance $(G,r,b,c)$ of the Pyramidal Routing problem there always 
exists an optimum routing whose paths form a tree (that is, such that 
the support of the corresponding $n$-vector induces a tree).} 

\medskip

As shown by Grandoni \emph{et al.} \cite{GKOS08}, the PR Conjecture
implies the VPN Conjecture. Moreover, it follows easily from their 
results that the PR Conjecture restricted to the class of outerplanar 
graphs implies the VPN Conjecture restricted to the class of 
outerplanar graphs.

We conclude this introduction by an outline of this extended 
abstract. In Section \ref{sec-tools} we gather several results 
which are at the same time basic and essential. First we note 
that if the PR Conjecture holds for all the blocks of a graph 
$G$ then it also holds for $G$. To a given graph $G$, root $r$ 
and demand vector $b$ we can associate in the standard way an 
upper-monotone polyhedron which we call the Pyramidal Routing
polyhedron (or PR polyhedron). 
The PR Conjecture is equivalent to the following
statement: all extreme points of the PR polyhedron correspond to 
tree routings or, in other words, the $y$-vector of any routing
is dominated by (i.e., coordinate-wise bigger or equal to) a
convex combination of $y$-vectors of tree routings. We thus 
obtain a formulation of the PR Conjecture that does not involve 
edge costs. Finally, we give a necessary condition for a routing
to determine an extreme point of the PR polyhedron. In particular, 
our necessary condition implies that in any such extremal routing
all paths from the root to a given vertex must coincide. 

In Section \ref{sec-minor-closed} we show that if the 
PR Conjecture is satisfied by a graph then it is satisfied 
by all its minors. The proof uses the block-decomposition 
result of Section \ref{sec-tools} and the PR Conjecture 
for cycles, which we have to reprove because we consider 
a strengthened version of the PR Conjecture. The minor-monotonicity 
of the PR Conjecture is allows to focus on restricted classes of 
graphs. For instance, we can restrict to graphs with maximum degree 
at most three. 

We prove our main result in Section \ref{sec-outerplanar}.
More specifically, we prove the PR Conjecture for ladders
(i.e., $2$-connected outerplanar graphs with maximum 
degree at most three), which implies the PR Conjecture for 
outerplanar graphs, which in turn implies the VPN Conjecture 
for outerplanar graphs. 


\section{Fundamental tools}

\label{sec-tools}

Our first lemma allows us to reduce the PR Conjecture to 
$2$-connected graphs. We do not include its easy proof here
but point out that it relies in an essential way on the fact
that, in our version of the Pyramidal Routing problem, the 
demand $b_r$ at the root can be arbitrary.

\begin{lemma}
\label{lem-2-connected}
If the PR Conjecture holds for all blocks (maximal connected
subgraphs without a cut-vertex) of a graph $G$ then it holds 
also for $G$. \qed
\end{lemma}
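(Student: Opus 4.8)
The plan is to decompose a routing of $G$ into routings of the individual blocks, and conversely to glue block routings into a routing of $G$, faithfully with respect to the cost function. Write $\mathcal{B}$ for the set of blocks and consider the block--cut tree of $G$, rooted at a block containing $r$. Each block $B\in\mathcal{B}$ then has a well-defined \emph{entry vertex} $r_B$, namely the cut vertex contained in $B$ that separates $B$ from $r$ (with $r_B=r$ for the root block); and for each vertex $u$ the $r$--$u$ path in the block--cut tree traverses $B$ only if $u$ lies in $B$ or in a block below $B$. Let $S_B$ be the set of such vertices $u$ and put $m_B:=\sum_{u\in S_B}b_u$.

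First I would set up, for each block $B$, the sub-instance $(B,r_B,b^B,c|_{E(B)})$, where for $v\neq r_B$ we let $b^B_v:=\sum(b_u : u\in S_B,\ \text{the }r\text{--}u\text{ path leaves }B\text{ at }v)$ and, crucially, $b^B_{r_B}:=k-m_B$. The point of this last choice is that the total demand of the sub-instance is then $k^B=k$, so its pyramidal function coincides with that of $G$; moreover $b^B_{r_B}\geq b_r\geq 1$, so the sub-instance is legitimate. It is exactly here that the assumption that the demand at the root may be arbitrary is used: $b^B_{r_B}$ is in general much larger than $1$.

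Next I would establish the two transfer operations. Given a routing $\mathcal{P}$ of $G$, restrict each path to every block it meets: a simple $r$--$u$ path with $u\in S_B$ restricts to a simple $r_B$--$v$ path in $B$ (where $v$ is the vertex at which $B$ is left on the way to $u$, possibly $v=u$), paths with $u\notin S_B$ miss $E(B)$ entirely, and adding $k-m_B$ trivial paths at $r_B$ yields a routing $\mathcal{P}^B$ of the sub-instance. Since $n_e(\mathcal{P}^B)=n_e(\mathcal{P})$ for every $e\in E(B)$ and $k^B=k$, we get $y_e(\mathcal{P}^B)=y_e(\mathcal{P})$ on $E(B)$; as the edge sets of the blocks partition $E(G)$, this gives $c^{T}y(\mathcal{P})=\sum_{B\in\mathcal{B}}c^{T}y(\mathcal{P}^B)$. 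Conversely, given routings $\mathcal{P}^B$ of the sub-instances, I would glue them along the block--cut tree: at a cut vertex $v$ that is the entry vertex of children $B_1,\dots,B_j$, the block $B$ above $v$ delivers $b^B_v=b_v+\sum_i m_{B_i}$ paths to $v$; keep $b_v$ of them and prolong $m_{B_i}$ of the rest into $B_i$ by appending the $m_{B_i}$ nontrivial paths of $\mathcal{P}^{B_i}$ (discarding its trivial ones). Consecutive pieces share only the cut vertex, so the concatenations are simple, and a routine check shows the endpoint multiplicities are exactly $b$; the result is a routing $\mathcal{P}$ of $G$ having the $\mathcal{P}^B$ as its restrictions, so again $c^{T}y(\mathcal{P})=\sum_B c^{T}y(\mathcal{P}^B)$.

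These operations send tree routings to tree routings (a union of subtrees glued at cut vertices is a tree, and conversely each restriction of a tree routing is a tree routing), and together they show that the optimum value of $G$ equals the sum over $B\in\mathcal{B}$ of the optimum values of the sub-instances. To finish: assuming the PR Conjecture for every block, pick an optimal \emph{tree} routing of each sub-instance and glue them; by the cost identity this is an optimal routing of $G$, and it is a tree routing. The only delicate point is conceptual — choosing $b^B_{r_B}=k-m_B$ so that the block pyramidal functions match the one of $G$; the remaining verifications (that the restricted paths are simple and form a valid routing of the prescribed sub-instance, and that gluing reproduces the demand vector $b$) are routine bookkeeping, and the conclusion is then immediate from additivity of the cost over blocks.
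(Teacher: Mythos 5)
Your proof is correct and follows exactly the route the paper intends: the paper omits the proof but flags that it ``relies in an essential way'' on the root demand being arbitrary, and your choice $b^B_{r_B}=k-m_B$ (making each block sub-instance have total demand $k$, so the pyramidal functions agree and costs add over blocks) is precisely that ingredient. The restriction/gluing bookkeeping you sketch is routine and sound, so nothing further is needed.
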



The following lemma provides a way to state the PR Conjecture 
without referring to edge costs. Given a graph $G=(V,E)$, root 
$r \in V$ and demands $b \in \mathbb{Z}_+^{V}$ we define the
{\em Pyramidal Routing polyhedron\/} (or {\em PR polyhedron\/})
$Q = Q(G,r,b)$ as the dominant of the convex hull of the $y$-vectors 
of routings in $G$. Thus we have
$
Q := \mathrm{conv} \{y(\mathcal{P}) \in \mathbb{R}^E : \mathcal{P} 
\textrm{ is a routing in } G\} + \mathbb{R}^E_+.
$
Solving a PR instance of the form $(G,r,b,c)$ amounts to minimizing
the linear function $y \mapsto c^Ty$ over the corresponding PR 
polyhedron $Q(G,r,b)$.
 
\begin{lemma}
\label{lem-cvx-PR}
The PR Conjecture holds for a certain graph $G$ if and only if all 
extreme points of the PR polyhedron correspond to tree routings. In 
other words, the PR Conjecture holds for $G$ if and only if 
for any routing $\mathcal{P}$ in $G$ there exists a collection of tree 
routings $\mathcal{T}_1$, \ldots, $\mathcal{T}_\ell$ and non-negative
coefficients $\lambda_1$, \ldots, $\lambda_\ell$ summing up to $1$ 
such that 
\begin{equation}
\label{eq-domin}
\sum_{i = 1}^\ell \lambda_i \, y(\mathcal{T}_i) \le y(\mathcal{P}).
\end{equation}
\end{lemma}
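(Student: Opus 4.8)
The plan is to identify the PR polyhedron $Q := Q(G,r,b)$ as a pointed, full-dimensional, upper-monotone polyhedron and then run an essentially routine polyhedral argument; throughout I fix the root $r$ and the demand vector $b$, the general statement being the conjunction over all $b$ of the assertions obtained this way. First I would record the basic structure of $Q$. Being the dominant of the convex hull of the finite point set $\{y(\mathcal P) : \mathcal P \textrm{ a routing in } G\}$, it is a polyhedron with recession cone $\mathbb{R}^E_+$; since all $y$-vectors are nonnegative we have $Q \subseteq \mathbb{R}^E_+$, so $Q$ is pointed; and $Q$ contains a translate of $\mathbb{R}^E_+$, so it is full-dimensional and in particular nonempty (a spanning tree of the connected graph $G$ already yields a routing). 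The one point here that is not immediate, and which I would prove next, is that \emph{every vertex $v$ of $Q$ equals $y(\mathcal P)$ for some routing $\mathcal P$}: writing $v = \sum_s \lambda_s\,y(\mathcal P_s) + w$ with $(\lambda_s)$ a convex combination and $w \in \mathbb{R}^E_+$, if $w \ne 0$ then $v = \tfrac12\bigl((v-w)+(v+w)\bigr)$ exhibits $v$ as the midpoint of two distinct points of $Q$, contradicting extremality; hence $w = 0$, and $v$, being a vertex of $Q$ that lies in $\mathrm{conv}\{y(\mathcal P)\}$, must equal one of the $y(\mathcal P_s)$. In particular the phrase ``extreme points correspond to tree routings'' is meaningful.

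Then I would establish the equivalence of three statements: (a) for every cost $c \in \mathbb{R}^E_+$ some tree routing minimizes $c^T y(\cdot)$ over all routings (this is the PR Conjecture for $(G,r,b)$); (b) every vertex of $Q$ is the $y$-vector of a tree routing; (c) the domination inequality \eqref{eq-domin} holds for every routing $\mathcal P$. For (b)$\Rightarrow$(a): given $c \in \mathbb{R}^E_+$, the linear function $y \mapsto c^T y$ is bounded below on the pointed polyhedron $Q$, hence attains its minimum at a vertex $v$; by (b) we have $v = y(\mathcal T)$ for a tree routing $\mathcal T$, and then $c^T y(\mathcal T) = c^T v \le c^T y(\mathcal P)$ for every routing $\mathcal P$, so $\mathcal T$ is optimum. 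For (a)$\Rightarrow$(b): let $v$ be a vertex of $Q$. Since $Q$ is upper-monotone, $v$ plus any standard unit vector lies in $Q$, so every $c$ in the normal cone $N_Q(v)$ has all coordinates nonnegative; this cone is full-dimensional (as $Q$ is full-dimensional and $v$ is a vertex), and for $c$ in its interior $v$ is the \emph{unique} minimizer of $c^T y$ over $Q$. Applying (a) to such a $c \in \mathbb{R}^E_+$ yields an optimum tree routing $\mathcal T$; since $\min_{y \in Q} c^T y$ coincides with the minimum of $c^T y(\cdot)$ over all routings (by the observation of the previous paragraph), $y(\mathcal T)$ is a minimizer over $Q$ and hence equals $v$. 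Thus $v$ is the $y$-vector of a tree routing.

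Finally I would show (b)$\Leftrightarrow$(c). Put $Q' := \mathrm{conv}\{y(\mathcal T) : \mathcal T \textrm{ a tree routing in } G\} + \mathbb{R}^E_+$, which is nonempty and satisfies $Q' \subseteq Q$. The condition in \eqref{eq-domin} asserts precisely that $y(\mathcal P) \in Q'$ for every routing $\mathcal P$, which, since $Q = \mathrm{conv}\{y(\mathcal P)\} + \mathbb{R}^E_+$ and $Q'$ is upper-monotone, amounts to $Q \subseteq Q'$, i.e.\ $Q = Q'$; and since $Q$ and $Q'$ share the recession cone $\mathbb{R}^E_+$, this holds iff every vertex of $Q$ belongs to $Q'$. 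The same midpoint argument as before shows that a vertex of $Q$ belongs to $Q'$ exactly when it is the $y$-vector of a tree routing, so this last condition is (b). I expect the one delicate step to be (a)$\Rightarrow$(b): one must use upper-monotonicity to know that a vertex of $Q$ is exposed by a \emph{nonnegative} objective and, by passing to the interior of the normal cone, that it is the \emph{unique} optimum — otherwise (a) would only place an optimal tree routing on some face through $v$, not at $v$ itself. Everything else is standard polyhedral bookkeeping.
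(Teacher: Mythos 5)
Your proof is correct and follows essentially the same route as the paper: the crucial direction rests, in both, on the fact that an extreme point of the upper-monotone polyhedron $Q$ is the unique optimum of some \emph{nonnegative} cost vector (you phrase this via the interior of the normal cone, the paper via strict separation), while the easy direction is the usual vertex/convex-combination argument. Your additional bookkeeping---that vertices of $Q$ are $y$-vectors of routings and that ``all extreme points are tree routings'' is equivalent to the domination statement \eqref{eq-domin}---is exactly what the paper leaves implicit in its ``in other words'', and is handled correctly.
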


\begin{proof}
For the backward implication, let $\mathcal{P}$ be an optimum 
solution to PR with respect to some cost vector $c \in \mathbb{R}^E_+$. 
Then (\ref{eq-domin}) implies $\sum_{i = 1}^\ell \lambda_i \, c^T y(\mathcal{T}_i) \le c^T y(\mathcal{P})$. So at least one of the tree 
routings $\mathcal{T}_1$, \ldots, $\mathcal{T}_\ell$ has a cost which 
does not exceed the cost of $\mathcal{P}$. That is, at least one of the 
tree routings is optimum.

Let us now prove the forward implication by contradiction. Suppose that 
the PR Conjecture holds for $G$ but the PR polyhedron has an extreme
point $y(\mathcal{P})$ where $\mathcal{P}$ is not a tree routing. Then we 
can separate $y(\mathcal{P})$ from the other points of the PR 
polyhedron by a hyperplane. Because dominants are upper-monotone, 
it follows that there exists a non-negative cost vector $c$ such that 
$c^T y(\mathcal{P}) < c^T y(\mathcal{Q})$ for all routings $\mathcal{Q}$ 
such that $y(\mathcal{Q}) \neq y(\mathcal{P})$. In particular, we have
$c^T y(\mathcal{P}) < c^T y(\mathcal{T})$ for all tree routings 
$\mathcal{T}$, a contradiction. The result follows.
\end{proof}

If $y$ and $y'$ are two vectors in $\mathbb{R}^E_+$ such 
that $y' \le y$ we say that $y$ is {\em dominated\/} by
$y'$. So if a routing $\mathcal{P}$ satisfies (\ref{eq-domin}) for 
some choice of tree routings $\mathcal{T}_i$ and non-negative 
coefficients $\lambda_i$ summing up to $1$, then the $y$-vector of
$\mathcal{P}$ is dominated by the corresponding convex combination 
of $y$-vectors of tree routings. We call a routing {\em extremal\/} 
if its $y$-vector is an extreme point of the PR polyhedron. So 
proving the PR Conjecture amounts to proving that all extremal
routings are tree routings or, equivalently, that the $y$-vector
of any routing is dominated by a convex combination of $y$-vectors
of tree routings.

The next lemma provides a useful necessary condition for 
a routing to be extremal. It essentially says that if a 
routing $\mathcal{P}$ is extremal then its $n$-vector has 
to be an extreme point of the polytope defined as the convex 
hull of the $n$-vectors of all routings in $G$.

\begin{lemma}
\label{lem-n-domin}
Let $\mathcal{P}$ and $\mathcal{P}_1, \ldots, \mathcal{P}_\ell$ with $n$-vector 
$n(\mathcal{P})$ and $n(\mathcal{P}_1), \ldots, n(\mathcal{P}_\ell)$
respectively. Then
\begin{equation*}
  n(\mathcal{P}) = \sum_{i = 1}^\ell \lambda_i\,n(\mathcal{P}_i)
  \quad\text{implies}\quad
  \sum_{i = 1}^\ell \lambda_i\,y(\mathcal{P}_i) \le y(\mathcal{P}).
\end{equation*}
In particular, $\mathcal{P}$ is not extremal whenever 
some routing $\mathcal{P}_i$ whose corresponding coefficient 
$\lambda_i$ is positive has a $y$-vector distinct from that 
of $\mathcal{P}$.
\end{lemma}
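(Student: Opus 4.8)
The plan is to reduce both assertions to one elementary fact: the pyramidal function $p : x \mapsto \min\{x,k-x\}$ is concave on $\mathbb{R}$, being the pointwise minimum of the two affine functions $x\mapsto x$ and $x\mapsto k-x$. The point to keep in mind throughout is that $k=\sum_{v\in V}b_v$ is one and the same for every routing in $G$, so that $y_e=p(n_e)$ holds with the \emph{same} function $p$ for $\mathcal{P}$ and for each $\mathcal{P}_i$, over the same ground set $E$.

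For the displayed inequality I would simply fix an edge $e\in E$ and apply Jensen's inequality to the concave function $p$ at the point $n(\mathcal{P})_e=\sum_{i=1}^\ell \lambda_i\,n(\mathcal{P}_i)_e$ (here the coefficients $\lambda_i$ are the nonnegative reals summing to $1$ that are implicit in the statement, the $n$-vector of $\mathcal P$ being a convex combination of those of the $\mathcal P_i$). This gives
\[
y(\mathcal{P})_e \;=\; p\!\left(\sum_{i=1}^\ell \lambda_i\, n(\mathcal{P}_i)_e\right) \;\ge\; \sum_{i=1}^\ell \lambda_i\, p\bigl(n(\mathcal{P}_i)_e\bigr) \;=\; \sum_{i=1}^\ell \lambda_i\, y(\mathcal{P}_i)_e ,
\]
and letting $e$ range over all edges yields $\sum_{i=1}^\ell \lambda_i\, y(\mathcal{P}_i)\le y(\mathcal{P})$. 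There is essentially no obstacle in this part; the only thing worth stressing is the common value of $k$, which is what makes $p$ the same function everywhere.

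For the ``in particular'' clause I would argue by contradiction. Assume $\mathcal{P}$ is extremal, so that $y(\mathcal{P})$ is an extreme point of the dominant $Q=Q(G,r,b)$. Two standard facts about such dominants do the job: every extreme point of $Q$ is a minimal element of $Q$ for the coordinate-wise order, and every extreme point of $Q$ is moreover an extreme point of the polytope $R:=\operatorname{conv}\{y(\mathcal{Q}):\mathcal{Q}\text{ a routing in }G\}$ (both are immediate: a non-minimal point of $Q$ is the midpoint of two distinct points of $Q$; and, using minimality, an extreme point of $Q$ lies in $R$ and a fortiori is extreme there, since $R\subseteq Q$). By the first part of the lemma, $z:=\sum_{i=1}^\ell \lambda_i\, y(\mathcal{P}_i)$ lies in $Q$ and satisfies $z\le y(\mathcal{P})$, so minimality forces $z=y(\mathcal{P})$. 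But then $y(\mathcal{P})$ is a convex combination of the routing $y$-vectors $y(\mathcal{P}_i)\in R$, whence extremality in $R$ forces $y(\mathcal{P}_i)=y(\mathcal{P})$ for every index $i$ with $\lambda_i>0$ — contradicting the hypothesis that some such $\mathcal{P}_i$ has $y(\mathcal{P}_i)\ne y(\mathcal{P})$. I expect this polyhedral bookkeeping, rather than any genuine difficulty, to be the bulk of the write-up; alternatively one can bypass the second fact by checking directly that $z=y(\mathcal{P})$ together with some $y(\mathcal{P}_j)\ne y(\mathcal{P})$, $\lambda_j\in(0,1)$, exhibits $y(\mathcal{P})$ as a proper convex combination of $y(\mathcal{P}_j)$ and of $(1-\lambda_j)^{-1}\sum_{i\ne j}\lambda_i\,y(\mathcal{P}_i)\in Q$, two distinct points of $Q$.
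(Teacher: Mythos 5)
Your proof of the displayed inequality is exactly the paper's argument: apply concavity (Jensen) of the pyramidal function $p$ edge by edge, using that $k$ is the same for all routings. The paper leaves the ``in particular'' clause implicit, and your polyhedral bookkeeping (extreme points of the dominant are minimal and are extreme points of $\operatorname{conv}\{y(\mathcal{Q})\}$) is a correct way to fill it in, so the proposal is correct and follows essentially the same route.
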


\begin{proof}
This follows from the concavity of the pyramidal function 
$p : x \mapsto \min \{x,k-x\}$. Indeed, for each edge $e$
we have
\begin{equation*}
\sum_{i=1}^\ell \lambda_i\, y_e(\mathcal{P}_i)
= \sum_{i=1}^\ell \lambda_i\, p(n_e(\mathcal{P}_i))
\le p \Big(\sum_{i=1}^\ell \lambda_i\, n_e(\mathcal{P}_i)\Big)
= p(n_e(\mathcal{P})) = y_e(\mathcal{P}).
\end{equation*}
\end{proof}

\begin{wrapfigure}{4}{0mm}
  \scalebox{0.5}{\input{taming.pstex_t}}
\end{wrapfigure}
We will use the above lemma in quite an intricate way in Section
\ref{sec-outerplanar} below.  However, it will more often be used in a
very simple way in trying to \textit{tame} the behavior of the paths
in an extremal routing.  For a path $P$ from the root $r$ to some
terminal $u$ and a vertex $v$ on $P$, denote by $P^{rv}$ the sub-path
from $r$ to $v$ and by $P^{v-}$ the sub-path from $v$ to the terminal
$u$.  The picture on the right illustrates these definitions in the
context of the following ``taming'' lemma.

\begin{lemma}[Taming]\label{lem-2-paths}\label{lem:taming}
  Suppose $P_1$, $P_2$ are paths in $\mathcal{P}$ and $v$ is a vertex
  contained in both $P$ and $P_2$.  Assume that the vertex sets of
  $P_1^{rv}$ and $P_2^{v-}$ are disjoint, as well as those of
  $P_2^{rv}$ and $P_1^{v-}$.  Denote by $P_3$ the concatenation of
  $P_1^{rv}$ and $P_2^{v-}$ and by $P_4$ the concatenation of
  $P_2^{rv}$ and $P_1^{v-}$, we have
  \begin{equation*}
    \frac{1}{2} y(\mathcal{P}\setminus\{P_1\}\cup\{P_4\}) +
    \frac{1}{2} y(\mathcal{P}\setminus\{P_2\}\cup\{P_3\})
    \le y(\mathcal{P}).
  \end{equation*}
  In particular, in an extremal routing, all paths from the root to a
  fixed terminal coincide. \qed
\end{lemma}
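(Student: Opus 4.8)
The plan is to deduce both assertions from Lemma~\ref{lem-n-domin}, i.e.\ from the concavity of the pyramidal function $p:x\mapsto\min\{x,k-x\}$. For the displayed inequality, set $\mathcal{Q}_1:=(\mathcal{P}\setminus\{P_1\})\cup\{P_4\}$ and $\mathcal{Q}_2:=(\mathcal{P}\setminus\{P_2\})\cup\{P_3\}$. First I would check that these are routings: the disjointness hypothesis (which, since $P_1^{rv}$ and $P_2^{v-}$ evidently share $v$, must be read as ``$v$ is their only common vertex'', and likewise for $P_2^{rv}$, $P_1^{v-}$) guarantees that $P_3$ and $P_4$ are simple paths; they start at $r$, and $P_3$ ends where $P_2$ ends while $P_4$ ends where $P_1$ ends, so $\mathcal{Q}_1$ and $\mathcal{Q}_2$ have the same multiset of path-endpoints as $\mathcal{P}$. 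Next, for every edge $e$ one has $[e\in P_1]+[e\in P_2]=[e\in P_1^{rv}]+[e\in P_1^{v-}]+[e\in P_2^{rv}]+[e\in P_2^{v-}]=[e\in P_3]+[e\in P_4]$, because each of $P_1,P_2,P_3,P_4$ is a simple path that splits into the indicated two pieces at $v$. Hence $n(\mathcal{Q}_1)+n(\mathcal{Q}_2)=2\,n(\mathcal{P})$, so $n(\mathcal{P})=\tfrac12 n(\mathcal{Q}_1)+\tfrac12 n(\mathcal{Q}_2)$, and Lemma~\ref{lem-n-domin} (with $\ell=2$, $\lambda_1=\lambda_2=\tfrac12$) yields exactly $\tfrac12 y(\mathcal{Q}_1)+\tfrac12 y(\mathcal{Q}_2)\le y(\mathcal{P})$.

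For the ``in particular'' clause I would argue by contradiction, and here the point I expect to be the real obstacle is that the path-splicing above is \emph{not} the right tool: two $r$--$u$ paths need not cross ``cleanly'' at any nontrivial vertex (for instance $r$--$a$--$b$--$u$ and $r$--$b$--$a$--$u$ --- splicing at $a$ or at $b$ produces a walk that revisits a vertex, so the Taming hypotheses fail there). Instead, suppose $\mathcal{P}$ is extremal yet contains two distinct paths $P_1\neq P_2$ from $r$ to a terminal $u$, and use the cruder exchange $\mathcal{R}_1:=(\mathcal{P}\setminus\{P_2\})\cup\{P_1\}$, $\mathcal{R}_2:=(\mathcal{P}\setminus\{P_1\})\cup\{P_2\}$. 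These are routings (an $r$--$u$ path is deleted and an $r$--$u$ path added), and clearly $n(\mathcal{R}_1)+n(\mathcal{R}_2)=2\,n(\mathcal{P})$; hence, by Lemma~\ref{lem-n-domin} and the extremality of $\mathcal{P}$, one must have $y(\mathcal{R}_1)=y(\mathcal{R}_2)=y(\mathcal{P})$. Now choose an edge $e$ in the symmetric difference of the edge sets of $P_1$ and $P_2$ --- it is nonempty, because two distinct simple paths with the same two endpoints cannot have equal edge sets --- say $e\in E(P_1)\setminus E(P_2)$, the reverse case being symmetric. Then $n(\mathcal{R}_1)_e=n_e+1$ and $n(\mathcal{R}_2)_e=n_e-1$, where $n_e:=n(\mathcal{P})_e\ge 1$, so we would need $p(n_e-1)=p(n_e)=p(n_e+1)$. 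But $p(x)=p(x+1)$ holds for at most one integer $x$ (namely $x=(k-1)/2$, and only when $k$ is odd), and likewise $p(x-1)=p(x)$ only for $x=(k+1)/2$; since $(k-1)/2\neq(k+1)/2$ this is impossible. The contradiction shows that all $r$--$u$ paths in an extremal routing coincide.

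Apart from reading the disjointness hypothesis correctly, and from the observation just made --- that for the second clause the splicing construction must be replaced by the exchange-of-multiplicities construction together with the elementary unimodality fact about $p$ --- everything is routine bookkeeping with the edge-indicator vectors of $P_1,\dots,P_4$, so I anticipate no further difficulty.
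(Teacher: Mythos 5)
Your proof is correct and follows the route the paper intends (the lemma is stated with its proof omitted): the displayed inequality is exactly the observation that $n(\mathcal{Q}_1)+n(\mathcal{Q}_2)=2\,n(\mathcal{P})$ for the two spliced routings, combined with Lemma~\ref{lem-n-domin}. Your treatment of the ``in particular'' clause---exchanging the multiplicities of $P_1$ and $P_2$ (which is just the splicing construction with $v$ taken at the common terminal, under the intended reading that the sub-paths meet only in $v$) and then ruling out $p(n_e-1)=p(n_e)=p(n_e+1)$ via the unimodality of $p$---is a correct completion of the detail the paper leaves implicit.
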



\section{Minor-monotonicity of the conjecture}

\label{sec-minor-closed}

In this section we prove that the class of graphs for which 
the PR Conjecture holds is closed under edge deletions and 
contractions. This is a key ingredient in the proof of our
main result because it allows us to focus only on graphs 
with maximum degree at most $3$. Moreover, if the PR Conjecture 
turns out to be false then Proposition \ref{prop-minor-closed}
indicates that there could still be a hope to characterize the 
graphs which do satisfy the PR~Conjecture.

The next result states that the our \textit{more general}
PR~Conjecture is true for cycles.  It is a crucial ingredient in
proving the minor-monotonicity of the PR~Conjecture, and hence in the
out result on outerplanar graphs.

\begin{lemma}
\label{lem-cycle}
The PR Conjecture holds in case $G$ is a cycle.
\end{lemma}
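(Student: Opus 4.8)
The plan is to use Lemma \ref{lem-cvx-PR}: it suffices to show that the $y$-vector of an arbitrary routing $\mathcal{P}$ in a cycle $G=C_n$ is dominated by a convex combination of $y$-vectors of tree routings. First I would normalize the routing. Fix an orientation of the cycle and label the vertices $r=v_0,v_1,\dots,v_{n-1}$ around it. Any simple path in a cycle from $r$ to a vertex $v_j$ is one of exactly two arcs: the ``clockwise'' arc using edges $v_0v_1,\dots,v_{j-1}v_j$, or the ``counterclockwise'' arc using the complementary edge set. So a routing is completely described by how, for each vertex $v$, its $b_v$ paths split between the clockwise and counterclockwise choice; only these multiplicities matter for the $n$-vector and hence for the $y$-vector. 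Thus $n_e$ for the edge $e_i = v_{i-1}v_i$ is simply the number of (vertex, path-copy) pairs whose chosen arc ``crosses'' $e_i$, which is a sum of the appropriate per-vertex multiplicities.

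Next I would identify the tree routings. A spanning tree of $C_n$ is obtained by deleting a single edge $e_i$; the corresponding tree routing $\mathcal{T}_i$ sends every path along the unique arc avoiding $e_i$. Its $n$-vector is easy to write down: $n_{e_j}(\mathcal{T}_i)$ counts the vertices ``separated from $r$ by $e_j$'' in the path $C_n - e_i$, with the right multiplicities $b_v$. The key structural claim is then: the $n$-vector of \emph{any} routing $\mathcal{P}$ lies in the convex hull of $\{n(\mathcal{T}_i) : i=1,\dots,n\}$ together with the ``all paths go one fixed way'' degenerate routings — equivalently, the polytope of achievable $n$-vectors on a cycle has as its only vertices the tree-routing $n$-vectors. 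Once this is established, Lemma \ref{lem-n-domin} immediately gives $\sum_i \lambda_i\, y(\mathcal{T}_i) \le y(\mathcal{P})$, which is exactly (\ref{eq-domin}), and we are done.

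To prove that structural claim I would argue directly on the per-vertex split variables. Write, for vertex $v_j$ with demand $b_j$, the number of its path-copies routed clockwise as $x_j \in \{0,1,\dots,b_j\}$ (the rest, $b_j - x_j$, go counterclockwise). The vector $n(\mathcal{P})$ is an affine-linear image of $x = (x_1,\dots,x_{n-1})$, and the tree routing $\mathcal{T}_i$ corresponds to a specific $0/b_j$ setting of these variables (all copies of $v_j$ take whichever arc avoids $e_i$). Since the box $\prod_j [0,b_j]$ has its vertices exactly at the $\{0,b_j\}$-settings, and the relevant tree routings realize a spanning set of these vertices — in fact on a cycle, \emph{every} $\{0,b_j\}$-vertex of the box corresponds to a routing whose $n$-vector is a convex combination of at most two tree-routing $n$-vectors (it is a ``laminar'' split: clockwise paths form a contiguous block of arcs, so are dominated by the two tree routings bounding that block) — it follows that $n(\mathcal{P})$, lying in the convex hull of the box vertices, lies in the convex hull of the $n(\mathcal{T}_i)$. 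I expect the main obstacle to be making this last ``$\{0,b_j\}$-vertex is a mix of two tree routings'' step precise: one has to use the Taming lemma (Lemma \ref{lem:taming}) to first reduce to routings in which, for each vertex, all $b_v$ paths coincide (so the split variables are genuinely $0$ or $b_v$), and then observe that an arbitrary such ``pure'' routing on a cycle need \emph{not} itself be a tree routing, but its $n$-vector is still a convex combination of two tree $n$-vectors because on a cycle deleting the ``least used'' edge and rerouting the few paths through it only decreases $n$ on some edges — a short exchange argument, concave-function style, pushing toward the extreme $\{0,b_j\}$ configurations that are honest spanning trees. I would organize the write-up so that the cycle's one-dimensional geometry does all the heavy lifting and Lemmas \ref{lem-cvx-PR}, \ref{lem-n-domin} and \ref{lem:taming} are invoked as black boxes.
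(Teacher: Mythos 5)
Your overall reduction (via Lemma \ref{lem-cvx-PR}, the per-vertex split variables, and the observation that only the clockwise/counterclockwise multiplicities matter) is fine, but the central structural claim is false: the $n$-vector of an arbitrary routing on a cycle does \emph{not} in general lie in the convex hull of the tree-routing $n$-vectors. Take a triangle with $r=v_0$, $b_r=b_{v_1}=b_{v_2}=1$ (so $k=3$), and route the path to $v_1$ counterclockwise and the path to $v_2$ clockwise; then $n=(1,2,1)$ with total edge usage $4$, whereas every tree routing has total usage at most $3$, so no convex combination of tree $n$-vectors can equal $n(\mathcal{P})$. (This routing is ``pure'' in your sense, so it is also a vertex of the image of the box, refuting the claim that the only vertices of the achievable-$n$ polytope are tree-routing $n$-vectors; and the clockwise-routed vertices of a pure routing need not form a contiguous block, so the ``laminar split, bounded by two trees'' step fails as well.) Since Lemma \ref{lem-n-domin} needs exact equality $n(\mathcal{P})=\sum_i\lambda_i\,n(\mathcal{P}_i)$, your route through it cannot be repaired by a small perturbation: what is true and needed is only the \emph{domination} $\sum_i\lambda_i\,y(\mathcal{T}_i)\le y(\mathcal{P})$, and that inequality essentially comes from the truncation of the pyramidal function at $k/2$, not from convexity of the $n$-vectors.

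This is exactly where the paper's argument differs. It considers an extremal routing, uses the Taming Lemma \ref{lem:taming} to force the interpolated $n$-function to have slopes $\pm1$ and to reduce to a canonical shape (increase, then decrease across the $k/2$-line, then increase), and then exhibits \emph{two} specific tree routings, omitting the edges $e_j$ and $e_{j'}$ where the decrease starts and ends, together with explicit coefficients $\lambda^1,\lambda^2$ read off from the geometry of the $n$-function relative to the $k/2$-line, so that $\lambda^1\bar f^1+\lambda^2\bar f^2\le\bar f$ pointwise. The comparison is an inequality between $y$-functions, valid precisely because the part of $f$ above $k/2$ gets reflected; no statement about $n$-vectors being in a convex hull is used or available. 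To fix your write-up you would have to abandon the convex-hull claim and instead carry out such a domination argument on the $y$-side (and note that, as the paper points out, with general $b_r\ge1$ a single tree really may not suffice, so the two-tree convex combination with computed coefficients is unavoidable).
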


We will sketch a proof below.

\begin{remark*}
  Since the version of the conjecture we consider here is more general
  than that of Grandoni \emph{et al.}  \cite{GKOS08},
  Lemma~\ref{lem-cycle} does not follow directly from their results.
  However, in a class of graphs which which is closed under
  edge-subdivisions, the two versions are equivalent.

  This can be shown as follows.  First, it is possible to show that an
  optimal tree routing for $0/1$-demands can be chosen to be a
  shortest path tree (this is proven similarly to the corresponding
  statement for the VPN, see \cite{GKKRY01}).  Second, one may use the
  same arguments as Grandoni \emph{et al.}~\cite{GKOS08} did for the
  VPN.
\end{remark*}

Grandoni \emph{et al.}  show that the $y$-vector of any routing on a
cycle is dominated by the $y$-vector of a single tree routing.  We now
sketch a proof of Lemma~\ref{lem-cycle}, which is a subtle
generalization of their argument: we dominate the $y$-vector by a
convex combination of at least two trees.  In fact, there are examples
showing that in our version of the conjecture, one tree may not be
sufficient.  Our approach generalizes to ladder graphs, as we will see
in Section~\ref{sec-outerplanar}.

\begin{proof}[Proof sketch]
We prove that any extremal routing in $G$ is a tree routing.
Without loss of generality we can assume that all vertices of the
cycle are terminals. If there is a non-terminal vertex $v$, 
then we can dissolve $v$, i.e., remove $v$ from the graph and make 
its two neighbors adjacent. (If the resulting graph is non-simple
then instead we use Lemma \ref{lem-2-paths} to conclude.)

Number the vertices and edges of the cycle consecutively as $w_0$, 
$e_0$, $w_1$, $e_1$, \ldots, $w_{m}$, $e_{m}$ where $w_0 =: r$ is 
the root.  Now let  $s_i := \sum_{j = 1}^i b_{w_i}$ 
for $i=0,\dots,m$ (i.e., $s_0=0$). 
Given any routing $\mathcal{Q}$, we define its \textit{$n$-function}
as the continuous function $f^{\mathcal Q} \colon [0,s_m] \to \mathbb{R}$
which satisfies $f^{\mathcal Q}(s_i) = n_{e_i}(\mathcal Q)$ for all
$i=0,\dots,m$ and is affine on every interval $[s_i,s_{i+1}]$. This 
is just the interpolation of the $n$-vector. We define the 
\textit{$y$-function} of $\mathcal{Q}$ as
$$
\bar f^{\mathcal Q} \colon [0,s_m] \to \mathbb{R} \colon 
t \mapsto \min \{f^{\mathcal Q}(t), k-f^{\mathcal Q}(t)\} 
= p(f^{\mathcal Q}(t)),
$$
so $\bar f^{\mathcal Q}(s_j) = y_{e_j}(\mathcal Q)$ for every index $j$.
Notice that the graph of $\bar f^{\mathcal Q}$ can be obtained from
that of $f^\mathcal{Q}$ by mirroring the part of the graph of 
$f^\mathcal{Q}$ above the {\em $k/2$-line\/}, that is, the horizontal 
line through the points $(0,k/2)$ and $(s_m,k/2)$.

Consider now an extremal routing $\mathcal{P}$ in $G$ and denote 
by $f$ its $n$-function and by $\bar f$ its $y$-function.  By 
Lemma~\ref{lem-2-paths} we can show that the affine segments of 
$f$ all have slopes in $\{-1,+1\}$.

By arguments similar to those we will use later in the 
proof of Theorem~\ref{th-outerplanar}, we can show that
the only interesting case is when the $n$-function $f$ 
consists of three affine parts: it increases on the 
interval $[0,s_j]$ for some $j$, then it decreases 
on the interval $[s_j,f(0)+s_j]$ and then increases 
again on the interval $[f(0)+s_j,s_m]$. Moreover, the
graph of $f$ crosses the $k/2$-line in the second 
interval, between $t = s_j$ and $t = f(0)+s_j$. Note 
that $f(0)+s_j = s_{j'}$ for some $j'$. 

Let $\mathcal T^1$ be the tree routing which omits edge 
$e_j$, and let $\mathcal T^2$ be the tree routing which omits
edge $e_{j'}$. Let $f^1$ and $f^2$ be the corresponding 
$n$-functions, and $\bar f^1$ and $\bar f^2$ the corresponding
$y$-functions. We show that $\bar f$ is dominated by a certain
convex combination $\lambda^1 \bar f^1 + \lambda^2 \bar f^2$
of $\bar f^1$ and $\bar f^2$. The coefficients $\lambda^1$ and
$\lambda^2$ are defined as
$$
\lambda^1 := \frac{\alpha_1}{\alpha_1 + \beta_1}
\quad
\textrm{and}
\quad
\lambda^2 := \frac{\beta_1}{\alpha_1 + \beta_1},
$$
where $\alpha_1 = \alpha_2$, $\beta_1 = \beta_2$ are as 
indicated in Fig.~\ref{fig:paths}. For the $y$-vector, this implies 
that we have $\lambda^1 y(\mathcal{T}^1) + \lambda^2 y(\mathcal{T}^2)
\le y(\mathcal{P})$. So either $\mathcal{P}$ is not extremal,
which contradicts our assumption, or $\mathcal{P}$ is a tree
routing. 
\begin{figure}[ht]
\centering
\scalebox{.6}{\input{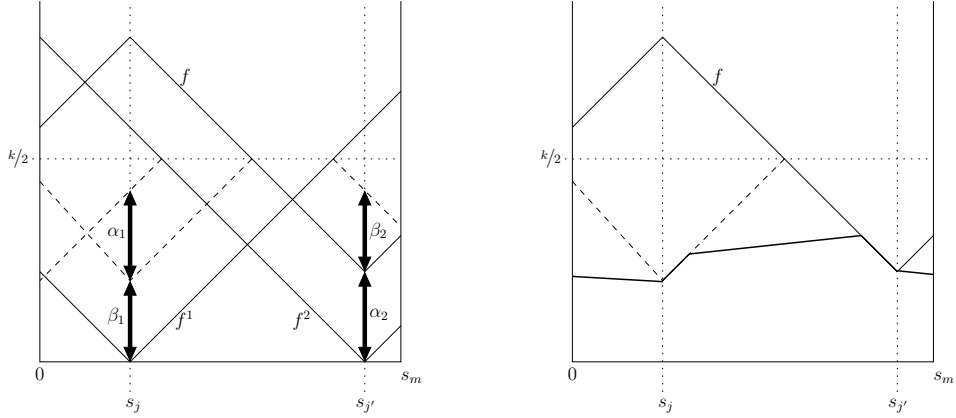}}
\caption{To the left: the graphs of the $n$-functions $f$, $f^1$ 
and $f^2$ (solid lines) and of the corresponding $y$-functions $\bar f$, 
$\bar f^1$ and $\bar f^2$ (the part of the graph of the $n$-function
which was reflected to obtain the graph of the $y$-function is
indicated by dashed lines). To the right: the graphs of $f$,
$\bar{f}$ and the convex combination $\lambda^1 \bar f^1 +
\lambda^2 f^2$ (in bold).}\label{fig:paths}
\end{figure}
\end{proof}

\begin{prop}
\label{prop-minor-closed}
If the PR Conjecture holds for $G$ then it holds for any 
minor of $G$.
\end{prop}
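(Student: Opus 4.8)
The plan is to reduce to the two elementary minor operations---deletion and contraction of a single edge---and show that each of them preserves the property ``the PR Conjecture holds''. Since every minor of $G$ is obtained by a finite sequence of such operations (together with the deletion of isolated vertices, which is trivial since they carry no demand once $b_r\geq1$ is moved appropriately, and in any case do not affect routings), it suffices to prove: if the PR Conjecture holds for $G$, then it holds for $G-e$ and for $G/e$. Throughout I will use Lemma~\ref{lem-cvx-PR} to work with the cost-free formulation: I must show that for every routing in the minor, its $y$-vector is dominated by a convex combination of $y$-vectors of tree routings \emph{in the minor}.

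For edge \textbf{deletion}, the key observation is that a routing in $G-e$ is exactly a routing in $G$ that happens to use no path through $e$, i.e.\ one whose $n$-vector has $n_e=0$. Given such a routing $\mathcal{P}$, viewed in $G$, the hypothesis gives tree routings $\mathcal{T}_1,\dots,\mathcal{T}_\ell$ in $G$ and coefficients $\lambda_i$ with $\sum_i\lambda_i\,y(\mathcal{T}_i)\le y(\mathcal{P})$. Since $y_e(\mathcal{P})=0$ and all $\lambda_i>0$, every $\mathcal{T}_i$ must satisfy $y_e(\mathcal{T}_i)=0$, hence $n_e(\mathcal{T}_i)\in\{0,k\}$. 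A tree routing with $n_e=k$ would force all $k$ paths through $e$; one can then argue (using that the paths start at $r$ and form a tree, and that the blocks/$2$-connectivity reduction of Lemma~\ref{lem-2-connected} lets us assume $G$ is $2$-connected, so $G-e$ is connected) that such a routing can be replaced by one avoiding $e$ without increasing any $y$-coordinate---or more simply, $n_e=k$ is impossible for a tree routing unless $e$ is a bridge separating $r$ from some terminal, in which case $e\notin G-e$ makes the instance infeasible, a case we may exclude. So each $\mathcal{T}_i$ avoids $e$, is thus a tree routing in $G-e$, and the same convex combination works.

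For edge \textbf{contraction} $G/e$ with $e=xy$, the idea is to lift a routing $\mathcal{P}$ in $G/e$ to a routing $\widehat{\mathcal{P}}$ in $G$ by choosing, for each path of $\mathcal{P}$ passing through the contracted vertex, whether it runs $x$-then-$y$ or $y$-then-$x$; the demand at the contracted vertex is split between $x$ and $y$ in $\widehat{\mathcal{P}}$ in some way. One then applies the hypothesis in $G$ to $\widehat{\mathcal{P}}$, obtaining tree routings $\widehat{\mathcal{T}}_i$ in $G$, and \emph{projects} them back to $G/e$ by contracting $e$ in each: contracting $e$ in a tree routing yields a tree routing in $G/e$ (a tree with an edge contracted is still a tree, and paths remain paths), and the projection can only decrease or preserve $n$-values on the surviving edges, hence---via concavity of the pyramidal function, exactly as in Lemma~\ref{lem-n-domin}---can only decrease the $y$-values. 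Combined with $\sum_i\lambda_i\,y(\widehat{\mathcal{T}}_i)\le y(\widehat{\mathcal{P}})$ and the fact that $y(\widehat{\mathcal{P}})$ projects to $y(\mathcal{P})$ on $G/e$, this gives the desired domination.

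The main obstacle is the contraction case, specifically controlling the parameter $k$: contracting $e$ changes the total demand (two vertices with demands $b_x,b_y$ become one with demand $b_x+b_y$ only if we merge them, but the total $k=\sum_v b_v$ is unchanged), so the pyramidal function $p(x)=\min\{x,k-x\}$ is the \emph{same} in $G$ and $G/e$, which is what makes the concavity argument go through---but one must check carefully that the lift $\widehat{\mathcal{P}}$ has exactly the same $k$ and that the projection of a tree routing is genuinely a routing (each path still starts at $r$ and the endpoint multiplicities still match $b$ on $G/e$). There is also a subtlety when $x$ or $y$ equals $r$: contracting then moves the root, but this is harmless as $r$ is just a distinguished terminal. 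A clean way to sidestep part of the bookkeeping is to first use Lemma~\ref{lem-cycle} together with Lemma~\ref{lem-2-connected} to handle the cases where contraction or deletion creates multi-edges or destroys $2$-connectivity, reducing the general statement to the generic situation described above; this is exactly the role Lemma~\ref{lem-cycle} plays, as flagged in the text preceding the proposition.
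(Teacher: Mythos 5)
The deletion half of your argument is essentially fine (in fact $n_e(\mathcal{T}_i)=k$ can never occur, because the $b_r\ge 1$ trivial paths at the root use no edges, so $n_f\le k-b_r<k$ for every edge $f$ of any routing; your bridge discussion is unnecessary), and this is the part the paper dismisses as clear. The contraction half, however, has a genuine gap, and it is exactly the point the paper's proof is built around. You assert that contracting $e$ in a tree routing $\widehat{\mathcal{T}}_i$ of $G$ yields a tree routing of $G/e$ ``since a tree with an edge contracted is still a tree''. That is true only when the support of $\widehat{\mathcal{T}}_i$ actually contains $e$. Nothing forces this: the tree routings produced by the hypothesis in $G$ may avoid $e$ while their support contains both endpoints of $e$, and then identifying these endpoints creates a cycle in the support (and a path visiting both endpoints ceases to be a simple path). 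So the projected objects need not be tree routings at all. The paper's proof addresses precisely this case: the contracted support is a tree plus one edge, its blocks are single edges and one cycle, and Lemmas \ref{lem-2-connected} and \ref{lem-cycle} are invoked to dominate each such $y(\mathcal{T}'_i)$ by a convex combination of genuine tree routings of $G/e$. You do cite these two lemmas, but only for multi-edges and loss of $2$-connectivity, which is not where they are needed; as written, your argument rests on the false ``tree stays tree'' step.

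There is a second flaw in the same step: you claim the projection ``can only decrease or preserve $n$-values, hence \dots can only decrease the $y$-values''. The pyramidal function $p$ is concave but not monotone: if $n_f>k/2$, decreasing $n_f$ \emph{increases} $y_f=\min\{n_f,k-n_f\}$, and Lemma \ref{lem-n-domin} does not help here since it concerns exact convex decompositions of $n$-vectors, not coordinatewise decreases. The paper avoids any monotonicity argument by building the lift so that the $n$-vectors agree exactly on the edges of $G/e$: writing $e=st$ and $u_e$ for the contracted vertex, all of the demand $b'_{u_e}$ is placed on $s$, one sets $b_t:=0$, and the lifted paths are rerouted so that none ends at $t$ or uses an edge $wt$ with $w$ a common neighbour of $s$ and $t$; then $n_{f'}(\mathcal{P}')=n_f(\mathcal{P})$, hence $y_{f'}(\mathcal{P}')=y_f(\mathcal{P})$, and the domination inequality transfers edge by edge. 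Your ``split the demand in some way'' lift does not guarantee this correspondence, so even the claim that $y(\widehat{\mathcal{P}})$ projects to $y(\mathcal{P})$ is unjustified. To repair the proof you should adopt a canonical lift of this kind and then treat the non-tree projections via Lemmas \ref{lem-2-connected} and \ref{lem-cycle}.
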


\begin{proof}
Graphs verifying the PR Conjecture are clearly closed under 
edge deletion. So it suffices to consider the case of edge
contractions. Let $e=st$ be an edge of $G$. Consider some 
instance $(G',r',b')$ of PR with $G' = G/e$ and some routing
$\mathcal{P'}$ in $G'$. Let $u_e$ denote the vertex of $G'$
resulting from the contraction of $e$. Now define a root $r$ 
and demands $b_v$ in $G$ as follows. If $r' \neq u_e$ then let 
$r := r'$, otherwise let $r := s$. Let $b_v := b'_v$ for $v \notin 
\{s,t\}$, $b_v := b_{u_e}$ for $v = s$ and $b_v := 0$ for $v = t$. 
Let $\mathcal{P}$ be the routing in $G$ obtained from $\mathcal{P}'$ 
by rerouting the paths containing $u_e$ in such a way that no path 
in $\mathcal{P}$ starts or ends in $t$ or uses an edge of $G$ incident 
to $t$ and to a common neighbor of $s$ and $t$. 

Consider $n := n(\mathcal{P})$, $n' := n(\mathcal{P}')$, 
$y := y(\mathcal{P})$ and $y' := y(\mathcal{P}')$. In order 
to relate $n'$ and $n$ (resp.\ $y'$ and $y$) we associate to 
each edge $f'$ in $G'$ a unique edge $f$ in $G$ as follows. We 
let $f := f'$ when $f'$ is not incident to $u_e$ or if $f = vu_e$ 
and $v$ is not a common neighbor of $s$ and $t$ in $G$; otherwise
$f' = wu_e$ for some common neighbor of $s$ and $t$ in $G$ and we
let $f := ws$. Then we have $n'_{f'} = n_{f}$ for all edges $f'$ 
of $G'$. In particular, we have $y'_{f'} = y_f$ for $f' \in E(G')$.

The PR Conjecture holds for $G$ so, by Lemma \ref{lem-cvx-PR},
the $y$-vector of $\mathcal{P}$ is dominated by a convex combination 
of $y$-vectors of tree routings, that is, $\sum_{i=1}^\ell \lambda_i\,y(\mathcal{T}_i) \le y(\mathcal{P})$ where the 
$\mathcal{T}_i$'s are some tree routings. Now let $\mathcal{T}'_i$ 
denote the routing obtained from the tree routing $\mathcal{T}_i$ by 
contracting the edge $e$. We remark that $\mathcal{T}'_i$ is not 
necessarily a tree routing. (It is if $\mathcal{T}_i$ uses the 
edge $e$.) Then we have $\sum_{i=1}^\ell \lambda_i 
y(\mathcal{T}'_i) \le y(\mathcal{P}')$. The support of each
$y(\mathcal{T}'_i)$ is either a tree or a tree plus an edge.
In the second case, by Lemmas \ref{lem-2-connected} and
\ref{lem-cycle}, $y(\mathcal{T}'_i)$ is dominated by a 
convex combination of $y$-vectors of tree routings. Hence
$y(\mathcal{P}')$ is dominated by a convex combination of 
$y$-vectors of tree routings. The result follows.
\end{proof}

From Proposition \ref{prop-minor-closed}, we can infer the
following. The PR-conjecture for graphs with maximum degree 
three implies the PR-conjecture. The PR-conjecture for (hexagonal) 
grids implies the PR-conjecture for planar graphs. The PR-conjecture 
for ladders implies the PR-conjecture for outerplanar graphs.
 
\section{Sketch of the proof for the outerplanar case}

\label{sec-outerplanar}

We come to the main result of this paper. 

\begin{theorem}
\label{th-outerplanar}
The PR-conjecture holds when $G$ is outerplanar.
\end{theorem}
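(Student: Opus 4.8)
To prove Theorem~\ref{th-outerplanar}, the plan is to reduce everything to a purely structural statement about \emph{ladders} --- $2$-connected outerplanar graphs of maximum degree at most three --- and then to run an induction on the face structure of the ladder that imitates, one face at a time, the argument behind Lemma~\ref{lem-cycle}. The reduction is immediate from what precedes: blocks of an outerplanar graph are outerplanar, so by Lemma~\ref{lem-2-connected} we may assume $G$ is $2$-connected; and every $2$-connected outerplanar graph is obtained from one of maximum degree at most three by contracting edges (repeatedly split a vertex of degree $\ge 4$ into two adjacent vertices so as to stay $2$-connected and outerplanar, then contract the edges thus created), so by Proposition~\ref{prop-minor-closed} we may assume $G$ is a ladder. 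Fixing a ladder $G$, by Lemma~\ref{lem-cvx-PR} it then suffices to show that every extremal routing $\mathcal{P}$ in $G$ is a tree routing.

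The first step is to tame $\mathcal{P}$. By the Taming Lemma~\ref{lem:taming}, for every terminal $v$ all $b_v$ copies of the $r$--$v$ path of $\mathcal{P}$ coincide, so $\mathcal{P}$ may be viewed as a multiset $\{P_v\}$ of paths indexed by the terminals; repeated use of the Taming Lemma together with Lemma~\ref{lem-n-domin} should moreover let me assume that the paths $P_v$ do not backtrack, i.e.\ that they behave monotonically along the outer Hamiltonian cycle $C$ and across each chord. In particular, for every inner face $F$ the restriction of $n(\mathcal{P})$ to the boundary cycle of $F$, interpolated as in the proof of Lemma~\ref{lem-cycle}, is a piecewise-affine \emph{$n$-function} with all slopes in $\{-1,0,+1\}$, and the corresponding \emph{$y$-function} is obtained by reflecting the part of its graph above the $k/2$-line.

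The main step is an induction on the number $t$ of inner faces of $G$. If $t=1$ then $G$ is a cycle and we are done by Lemma~\ref{lem-cycle}. If $t \ge 2$, then since the inner faces of $G$ (two of them joined when they share a chord) form a tree with a leaf, $G$ has an \emph{ear}: an inner face $F$ bounded by a single chord $f=xy$ together with an $x$--$y$ path $Q$ along $C$ all of whose internal vertices have degree $2$. Let $G'$ be the ladder obtained by deleting the internal vertices of $Q$ (so that $f$ becomes an outer-cycle edge of $G'$), with the demand of each deleted vertex moved to $x$ or to $y$. Now I would apply the argument of Lemma~\ref{lem-cycle} \emph{inside $F$ only}, with $f$ playing the role of the omitted edge: the three-piece ``increase--decrease--increase'' shape of the $n$-function of $\mathcal{P}$ on the boundary cycle of $F$ picks out at most two critical edges of that cycle, and choosing coefficients of the $\alpha/(\alpha+\beta)$ type of Figure~\ref{fig:paths} produces routings $\mathcal{P}_j$ --- all agreeing with $\mathcal{P}$ outside $F$ and each \emph{tree-like on $F$} (the paths meeting $F$ use only the edges of the boundary cycle of $F$ minus one critical edge) --- together with coefficients $\mu_j \ge 0$, $\sum_j \mu_j = 1$, such that $\sum_j \mu_j\, y(\mathcal{P}_j) \le y(\mathcal{P})$. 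Because $\mathcal{P}$ is extremal and the PR~polyhedron is a dominant, this forces $\sum_j \mu_j\, y(\mathcal{P}_j) = y(\mathcal{P})$, hence $y(\mathcal{P}_j) = y(\mathcal{P})$ for every $j$ with $\mu_j > 0$. Forgetting the part inside $F$ turns each such $\mathcal{P}_j$ into a routing of a PR instance on $G'$; by the induction hypothesis its $y$-vector is dominated by a convex combination of tree routings of $G'$, and re-attaching the fixed tree on $F$ lifts this to a convex combination of tree routings of $G$ dominating $y(\mathcal{P}_j) = y(\mathcal{P})$. So $\mathcal{P}$ is dominated by a convex combination of tree routings, whence by Lemma~\ref{lem-cvx-PR} --- equivalently, because $\mathcal{P}$ is extremal --- $\mathcal{P}$ is itself a tree routing, which completes the induction and the proof.

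The hard part is the cycle-type argument on the single face $F$, which, unlike the whole-cycle situation of Lemma~\ref{lem-cycle}, must interface with the rest of the graph. I expect the real work to be a case analysis establishing that, after taming, the $n$-function of $\mathcal{P}$ on the boundary cycle of $F$ genuinely has the ``increase--decrease--increase with the $k/2$-crossing in the middle piece'' shape that makes two trees suffice --- distinguishing whether the root lies on $F$, whether the $k/2$-crossing falls inside $F$ or in $G'$, and how paths that merely traverse $F$ (entering at $x$ and leaving at $y$, possibly through the chord $f$) interact with paths ending inside $F$ --- and that the reflected picture on $F$ is compatible, across $f$, with the inductive structure on $G'$, so that the coefficients $\mu_j$ chosen on $F$ combine cleanly with the inductive coefficients on $G'$. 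The remaining bookkeeping is routine, since domination is preserved under restricting routings, under lifting along the ear, and under convex combinations (Lemmas~\ref{lem-n-domin} and~\ref{lem-cvx-PR}).
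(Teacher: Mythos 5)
Your overall scaffolding is the same as the paper's: reduce to ladders via Lemma~\ref{lem-2-connected} and Proposition~\ref{prop-minor-closed}, induct with the base case Lemma~\ref{lem-cycle}, isolate the end face (your ``ear'' is exactly the paper's lowest cycle with top edge $uv$), and dominate $y(\mathcal P)$ by a convex combination of two routings that are tree-like on that face, with coefficients of the $\alpha/(\alpha+\beta)$ type. The problem is that everything you defer as ``expected case analysis'' and ``routine bookkeeping'' is precisely the content of the proof. Concretely: (i) you never deal with paths that merely traverse the face (the paper's thru paths) nor with the four entry patterns $rut$, $rvut$, $rvt$, $ruvt$; the Taming Lemma~\ref{lem:taming} plus an extra lemma are needed to kill the mixed cases and to reduce to the single hard case where every path ending below $uv$ uses $uv$. (ii) Taming and Lemma~\ref{lem-n-domin} do \emph{not} give you the ``increase--decrease--increase with one $k/2$-crossing'' shape of the $n$-function on the face: taming only identifies paths to a common terminal, while the wobbling (many peaks, valleys, crossings) comes from different terminals being approached from different sides. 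The paper needs a separate smoothing device (reflecting the $n$-function between two points of equal height, which is a statement about the existence of a modified \emph{routing}, not just about functions) to reach that shape. (iii) The actual choice of the two routings is not canonical and you do not specify it: one of them is obtained by reversing the arrival orientation of \emph{every} terminal below $uv$ (so it omits $uv$), the other omits the critical outer edge and must use $uv$ as sparingly as possible; the domination then rests on quantitative facts ($\alpha_2=\alpha_1$, $\beta_2\le\beta_1$, monotonicity of the affine combination in $\beta_2$, concavity on the remaining intervals, and a separate check on the edge $uv$ itself) that have no analogue in the plain cycle argument and do not follow from it.

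There is also a concrete flaw in your induction step. You delete the interior of the ear, move its demands to $x$ and $y$, apply the induction hypothesis on $G'$, and ``re-attach the fixed tree on $F$''. But the induction hypothesis only controls the $y$-values on $E(G')$; the lifted tree routings of $G$ must additionally be checked on the ear edges, where their $n$-values are governed by how the $G'$-trees split the transferred demand and need not reproduce those of $\mathcal P_j$. Worse, when the omitted critical edge is the chord $f=xy$ the extension runs along the entire outer $x$--$y$ path, and since any tree routing of $G'$ already connects $x$ to $y$, the lift can contain a cycle, i.e.\ it need not be a tree routing at all. The paper sidesteps this entirely: its two routings omit an edge $e$ of $G$ itself, so the induction hypothesis is invoked on $G-e$ (after a block decomposition if $e$ is an outer edge), and the domination extends to $e$ trivially because all routings involved have $y_e=0$. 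So while your route is the right one in outline, the proposal as written is a plan for the proof rather than a proof: the face-level uncrossing argument, which is where the degree-three hypothesis and the minor reduction earn their keep, is still missing.
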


The remainder of this section is dedicated to sketching the proof for
this result.

\begin{wrapfigure}{4}{0mm}
  \scalebox{.49}{\input{ladder.pstex_t}}
\end{wrapfigure}
By the previous remark, it suffices to consider the case
where $G$ is a ladder.  A \textit{ladder} is any graph obtained from a
matching $H$ with $E(H) = \{v_iv'_i : i = 1, \ldots, k\}$ by adding
$2k-2$ $H$-paths with no common internal vertex in such a way that
there is precisely one path from $v_i$ to $v_{i+1}$, and one path from
$v'_i$ to $v'_{i+1}$ for $i = 1, \ldots, k-1$; see the picture on the
right for an example (the letters refer to definitions below).  The
matching edges are called \textit{rungs} of the ladder.

%
%

\newcommand{\lt}{\left}
\newcommand{\rt}{\right}
%

Our proof of the PR~conjecture for ladders proceeds by induction on the number of rungs.  The case when $G$ has two rungs is done in Lemma~\ref{lem-cycle}.
Suppose now that a ladder $G$ with at least three rungs and demands $b_v$, $v\in V$, are given, and let $\mathcal P$ be any routing on $G$.

If there is an edge of $G$ which is not used by $\mathcal P$, then, possibly by using the block-decomposition Lemma~\ref{lem-2-connected}, we can invoke the
induction hypotheses to write the $y$-vector $y(\mathcal P)$ of this routing as a convex combination of $y$-vectors of tree routings as in \eqref{eq-domin},
Lemma~\ref{lem-cvx-PR}.
Otherwise, we will accomplish the induction step by producing a number of routings, $\mathcal P_1,\dots,\mathcal P_n$, each of which omits some edge of $G$, and
establish that $y(\mathcal P)$ is dominated by the convex hull of the $y$-vectors of these routings.  The induction hypothesis then yields, for each $\mathcal
P_j$, a convex combination of $y$-vectors of tree routings dominating $y(\mathcal P_j)$.  Thus, a convex combination of tree routings dominating $y(\mathcal P)$ is
found.

The strategy which we use is to focus on the lowest cycle of the ladder.  There we are able to ``uncross'' $\mathcal P$.  This strategy relies crucially on the
fact that the degrees of the vertices $u$, $v$ on the \textit{top edge} of the lowest cycle are not larger than three.  In fact, there are examples which show that
the strategy fails in general outerplanar graphs.  Thus, the minor-monotonicity is indispensable for our proof.

The strategy is implemented in the following steps.
\begin{enumerate}
\item Examine in what ways a path may meddle with the lowest cycle (cf. Section~\ref{ssec:examine-meddle}).
\item ``Smooth'' the routing to reduce the amount ``wobbling'' (cf. Section~\ref{ssec:smooth}).
\item For the smoothed routing, identify routings omitting edges and establish a convex combination of their $y$-vectors (cf. Section~\ref{ssec:cvxcomb}).
\end{enumerate}
The last step is the most onerous one.  The key ingredient there is $n$-functions and $y$-functions
similarly to those we use for the proof of Lemma~\ref{lem-cycle}.

\subsection{Examine in what ways a path may meddle with the lowest cycle}\label{ssec:examine-meddle}

Let $uv$ be the top edge of the lowest cycle, let $U$ be the vertex set of the lower connected component of $G-\{u,v\}$, and let $\bar U$ be the subgraph of $G$
induced by $U\cup\{u,v\}$ (see the above picture of a ladder).  W.l.o.g., we may assume that the root is not in $U$.  In what way may a path intersect $\bar U$?
\begin{itemize}
\item It may enter $\bar U$ and then head for a terminal which is not in $U$.  We call these paths \textit{thru paths,} and denote their number by $q$.
\item It may enter $\bar U$ and end at a terminal in $U$.  There are four ways how this can be done.  We symbolize them by strings $rXt$, where $X$ is replaced by
  the intersection of the path with $\{u,v\}$, taking into account the order in which the vertices $u$ and $v$ are visited on the path from the root to the
  terminal.  The following patterns are possible: $rut$, $rvut$; $rvt$, $ruvt$.
\end{itemize}

\begin{wrapfigure}{4}{0mm}%
  \begin{tabular}{|l|ll|}
    \hline
    ~        & $rut$ & $rvut$ \\
    \hline
    $rvt$    &  A    &  B   \\
    $ruvt$   &  B    &  A'\\
    \hline
  \end{tabular}%
\end{wrapfigure}
Note that, since $u$ and $v$ have degree three, if a path intersects $\{u,v\}$ in both $u$ and $v$, then it must contain the edge $uv$.  An application of the
Taming Lemma~\ref{lem-2-paths} yields that we can assume that $rut$ and $rvut$ do not both occur in $\mathcal P$: Such paths, starting from $r$, enter $U$ via $u$,
so their sub-paths connecting $r$ to $u$ can be interchanged.  The same holds for $rvt$ and $ruvt$.  We are faced with the matrix of cases depicted on the right.
Clearly, the two cases marked B are symmetric.  It is easy to see that they can be reduced to the cycle, i.e., to Lemma~\ref{lem-cycle}.  (We emphasize that we
need the full strength of Lemma~\ref{lem-cycle} here, i.e., demands $>1$ on the root of the cycle may occur.)

As for Case~A, we need to invoke the following easy lemma, which holds independently from our case distinction, and will be used for Case~A', too.

\begin{lemma}
  The $y$-vector of any routing $\mathcal P$ is dominated by a convex combination of tree routings and routings in which no thru path uses the top edge $uv$.
\end{lemma}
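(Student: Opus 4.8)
The plan is to reroute every thru path that uses $uv$ ``the other way round'' the lowest cycle --- through $U$ --- and then to show that this merely moves $y(\mathcal P)$ towards a convex combination of $y$-vectors of cheaper routings. The crucial first step is a structural observation: if a thru path $P$ uses the edge $uv$, then $P$ meets $\bar U$ only in $u$ and $v$ and uses no edge of $\bar U$ other than $uv$. Indeed, each of $u,v$ has degree three, its incident edges being $uv$, one edge leading into $U$, and one edge leaving $\bar U$ towards the root; if $P$ used the edge at $u$ (or at $v$) leading into $U$, then, as $u$ and $v$ are the only vertices through which a path can enter or leave $U$ and $P$ is simple, both endpoints of $P$ would lie in $U$ --- impossible, since $P$ starts at the root $r \notin U$ and, being a thru path, ends outside $U$. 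Hence, writing (w.l.o.g.) $P = P^{ru} \cup \{uv\} \cup P^{v-}$ with $u$ preceding $v$ on $P$, the subpaths $P^{ru}$ and $P^{v-}$ are both vertex-disjoint from $U$.

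Let $Q$ be the unique $u$--$v$ path contained in $\bar U$ other than the edge $uv$ (the ``bottom arc'' of the lowest cycle), and for a thru path $P$ using $uv$ put $P' := P^{ru} \cup Q \cup P^{v-}$. By the structural observation $P'$ is again a simple path with the same endpoints as $P$, it is a thru path, and it does not use $uv$; moreover replacing $P$ by $P'$ in $\mathcal P$ changes the $n$-vector only on $\{uv\} \cup E(Q)$, decreasing $n_{uv}$ by one and increasing each $n_e$, $e \in E(Q)$, by one. Since only edges of the lowest cycle are ever touched, the whole question localizes to $\bar U$: freezing the parts of all paths lying outside $\bar U$ and recording, as demand at $u$ and at $v$, how many frozen ends are attached there, the claim becomes a statement about routings on the cycle $\bar U$ --- a cycle which may carry demand larger than one at its root --- and there we may invoke the strengthened PR~Conjecture for cycles, Lemma~\ref{lem-cycle}. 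After reattaching the frozen ends, the tree routings on $\bar U$ delivered by Lemma~\ref{lem-cycle} become routings of $G$ that, inside $\bar U$, use either $uv$ or $Q$ but never both --- hence in which no thru path uses $uv$ --- together with, possibly, genuine tree routings of $G$, which is precisely the form claimed.

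The step I expect to be the main obstacle is making this localization completely rigorous: one has to verify that reattaching the frozen ends to the various cycle-routings produced by Lemma~\ref{lem-cycle} always yields honest \emph{simple} paths --- which is exactly where the degree-three assumption on $u$ and $v$ enters, through the structural observation --- and that the translation of demands between $G$ and $\bar U$ is consistent. The non-monotonicity of the pyramidal function $p$, which would sink a naive ``reroute everything at once'' attempt (removing a path from $uv$ raises $y_{uv}$ as soon as $n_{uv} > k/2$, and the edges of $Q$ can likewise gain $y$-value), is already absorbed inside Lemma~\ref{lem-cycle} and so causes no trouble here; an alternative, should the localization prove awkward, is to reroute only a suitable fraction of the thru paths through $Q$ and then appeal to Lemma~\ref{lem-n-domin} and the concavity of $p$, inducting on the number of thru paths using $uv$.
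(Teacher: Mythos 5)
Your structural observation about thru paths that use $uv$ is correct, but the core of your argument --- the ``localization'' to the cycle $\bar U$ followed by an appeal to Lemma~\ref{lem-cycle} --- does not go through, and this is precisely the obstacle the paper's case analysis is built around. A PR instance on a cycle has a single root, and Lemma~\ref{lem-cycle} only speaks about collections of paths all emanating from that root. When you freeze the parts of the paths outside $\bar U$, the restricted segments start at $u$ \emph{or} at $v$, depending on where each original path enters $\bar U$; in Case~A' both entry points genuinely occur ($r_u$ paths of type $ruvt$, $r_v$ paths of type $rvut$, plus thru paths entering on either side). Such a two-entry collection is not a routing of any rooted cycle instance, and the trick of lumping all outside demand at a single root (which is what makes Case~B reducible to Lemma~\ref{lem-cycle}, and why root demands $>1$ are needed there) is unavailable: declaring $u$ the root misrepresents the segments entering at $v$, and the total demand $k$ that enters the pyramidal function no longer matches. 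This is exactly why the paper does \emph{not} dispatch Case~A' via Lemma~\ref{lem-cycle} but develops the smoothing and $n$-function machinery of Sections~\ref{ssec:smooth}--\ref{ssec:cvxcomb}; if your localization worked, most of Section~\ref{sec-outerplanar} would be superfluous.

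Two further steps fail even granting the localization. First, reattachment: the tree routings delivered by Lemma~\ref{lem-cycle} may route an inside segment from the ``wrong'' entry vertex, so gluing it to its frozen outside piece either does not produce a path or forces changes outside $\bar U$, altering $n$-values (and possibly increasing $y$-values) on edges about which the cycle lemma says nothing. Second, the conclusion ``hence no thru path uses $uv$'' is a non sequitur: a tree routing of the cycle omits \emph{some} edge, and if the omitted edge lies on the bottom arc $Q$, the reattached thru segments are routed through $uv$ --- exactly what must be avoided --- and the resulting routing need not be a tree routing of $G$ either. The paper's actual proof is structurally different: it applies the Taming Lemma~\ref{lem-2-paths} to pairs consisting of a thru path using $uv$ and a thru path walking around $U$, thereby dominating $y(\mathcal P)$ by a convex combination of routings in which \emph{all} thru paths use $uv$ or \emph{all} walk around $U$; the latter are already of the desired form, while the former are handled by removing $U$, updating the demands at $u$ and $v$, and invoking the induction hypothesis on the smaller ladder --- which is where the ``tree routings'' in the statement come from. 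Your fallback idea of rerouting only a fraction of the thru paths and using concavity is closer in spirit to this swap argument, but as stated it is too vague to evaluate.
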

\begin{proof}[Proof (sketch).]
  This follows again by an application of the Taming Lemma~\ref{lem-2-paths}: we can dominate $y(\mathcal P)$ by routings in which \textsl{either} all thru paths
  use the top edge $uv$, \textsl{or} they all walk around $U$.  For the first type, we can remove $U$ from $G$, suitably update the demands for the vertices $u$
  and $v$, and invoke the induction hypotheses.
\end{proof}

\begin{wrapfigure}{4}{0mm}
  \scalebox{.75}{\input{U.pstex_t}}
\end{wrapfigure}
Hence, Case~A implies that the top edge $uv$ is not used by $\mathcal P$.  We are left with Case~A'.  The situation on $\bar U$ is visualized in the picture on the
right.  There are, say, $r_u\ge 0$, paths of type $ruvt$ entering $\bar U$ through the vertex $u$ and heading for a terminal in $U$ via $uv$, and, say, $r_v\ge 0$,
paths of type $rvut$ entering $\bar U$ through the vertex $v$ and heading for terminal in $U$ via $uv$.  In addition, there are $q$ thru paths entering $\bar U$ at
either $u$ or $v$, respectively, walking all the way along $U$, and either ending at $v$ or $u$, respectively, or leaving $\bar U$ again.  The numbers next to the
edges in the picture show known values of the $n$-vector for $\mathcal P$: The top edge $uv$ is used by $r_u+r_v$ paths, the topmost vertical edges by $r_v+q$ and
$r_u+q$ paths, respectively.

\subsection{Smoothing the routing}\label{ssec:smooth}

To study how $\mathcal P$ behaves on $\bar U\setminus\{uv\}$, we use $n$-functions and $y$-functions similarly to those in the proof of Lemma~\ref{lem-cycle}.
Here, we number the vertices and edges of $\bar U\setminus\{uv\}$ consecutively as $u=w_0$, $e_0$, \ldots, $w_{m}$, $e_{m}$.  Now let $s_i := \sum_{j=1}^i b_{w_i}$
for $i=0,\dots,m$.  For any routing $\mathcal Q$, we then define the $n$-function $f^\mathcal Q\colon[0,s_m]\to\RR$ and $y$-function $\bar f^{\mathcal
  Q}\colon[0,s_m]\to\RR$ precisely as in the proof of Lemma~\ref{lem-cycle}.  We abbreviate $f := f^\mathcal P$ and $\bar f := \bar f^\mathcal P$.

We say that a \textit{crossing} of any $n$-function $g\colon[0,s_m]\to\RR$ is a point $t$ in the open interval $\lt]0,s_m\rt[$ in which the function is smooth and
at which the graph of $g$ intersects the horizontal line through $(0,k/2)$ transversely, i.e., $g$ is affine near $t$, $g(t)=k/2$, and the slope of $g$ in $t$ is
$\pm 1$.
In general $f$ may have have many crossings, \textit{peaks} (i.e., local maxima in the open interval $\lt]0,s_m\rt[$) and \textit{valleys} (i.e., local minima in
the open interval): the function \textit{wobbles.}  The following lemma describes what can be done in such a situation.

\begin{lemma}
  Let $f^{\mathcal P}$ be the $n$-function of some routing $\mathcal P$.  Let $t_1$, $t_2$ be integers in $[0,s_m]$ with $f^{\mathcal P}(t_1) = f^{\mathcal
    P}(t_2)$.  By reflecting the graph of $f^{\mathcal P}$ in the interval $[t_1,t_2]$ at the horizontal line through $(0,f(t_1))$, we obtain the graph of an
  $n$-function of some routing.
\end{lemma}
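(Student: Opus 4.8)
The plan is to realize the reflected function as the $n$-function of an explicitly constructed routing $\mathcal Q$, the crux being that reflecting the $n$-function on a sub-interval is nothing more than interchanging, for a contiguous block of vertices of $U$, the side of $\Pi:=\bar U\setminus\{uv\}$ from which those vertices are reached. So I would first pin down the possible shape of an arbitrary routing on $\Pi$, which is the path $u=w_0,e_0,w_1,\dots,w_m,e_m,v$ whose interior vertices $w_1,\dots,w_m$ are exactly those of $U$. Since $u$ and $v$ have degree three in $G$, since $\bar U$ together with $uv$ is a cycle attached to the rest of $G$ only at $u$ and $v$, and since $r\notin U$, the intersection $P\cap\Pi$ of any $P\in\mathcal P$ with $\Pi$ is a sub-path of $\Pi$ which, unless it is empty or a single vertex, is \emph{anchored}: it runs from $u$ to some $w_j$, or from some $w_j$ to $v$, or along all of $\Pi$. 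A path ending at a vertex of $U$ produces a (non-degenerate) intersection of one of the first two kinds; every other path produces all of $\Pi$ or a degenerate intersection. Writing $\ell_j$ (resp.\ $\rho_j$) for the number of paths of $\mathcal P$ ending at $w_j$ whose $\Pi$-part runs from $u$ to $w_j$ (resp.\ from $w_j$ to $v$), so that $\ell_j+\rho_j=b_{w_j}$, and $q$ for the number of paths whose $\Pi$-part is all of $\Pi$, one obtains for every $i$
\begin{equation*}
  n_{e_i}(\mathcal P)=q+\sum_{j>i}\ell_j+\sum_{j\le i}\rho_j ,
\end{equation*}
so that $f=f^{\mathcal P}$ is completely determined by $q$ and the pairs $(\ell_j,\rho_j)$.

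Next I would read off what the reflection does. Taking $t_1=s_a<s_b=t_2$ to be break-points of $f$, the reflection height $h=f(s_a)=f(s_b)$ is an integer and the reflected function $g$ is again integer-valued at the break-points. Put $B:=\{w_j:a<j\le b\}$. Since the increment of $f$ across $w_j$ equals $\rho_j-\ell_j$ and the reflection negates exactly the increments across the vertices of $B$, the function $g$ is precisely the $n$-function obtained from the data of $f$ by keeping $q$ and every pair $(\ell_j,\rho_j)$ with $w_j\notin B$, and swapping $\ell_j\leftrightarrow\rho_j$ for each $w_j\in B$. So I define $\mathcal Q$ to realize exactly this: keep every path of $\mathcal P$ not ending at a vertex of $B$, and for each $w_j\in B$ route the $b_{w_j}$ paths ending at $w_j$ so that $\rho_j$ of them reach $w_j$ from the $u$-side and $\ell_j$ from the $v$-side, a path reaching $w_j$ from the $u$-side being the concatenation of any $r$--$u$ path of $G-U$ with the $u$--$w_j$ sub-path of $\Pi$ (symmetrically from the $v$-side). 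Such concatenations are simple, and their $r$--$u$ (resp.\ $r$--$v$) prefixes use no edge of $\Pi$, so $f^{\mathcal Q}$ depends only on the prescribed $\Pi$-parts; plugging the modified data into the displayed formula and using $\sum_{w_j\in B}(\rho_j-\ell_j)=f(s_b)-f(s_a)=0$ gives $f^{\mathcal Q}(s_i)=g(s_i)$ for all $i$, hence $f^{\mathcal Q}=g$. Since the new counts are still nonnegative and sum to $b_{w_j}$, $\mathcal Q$ is indeed a routing.

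The only genuine idea here is the identification of the reflection with a block-transposition of entry sides; once the combinatorial encoding $(q,(\ell_j),(\rho_j))$ is in place, the construction of $\mathcal Q$ and the verification are routine bookkeeping, and the anchored-sub-path structure of routings on $\Pi$ follows immediately from the degree-three hypothesis at $u,v$. The one technical point to watch is that the reflected function really is an integer-valued $n$-vector, i.e.\ that the reflection height $h=f(t_1)$ is compatible with integrality at the break-points; this is why I work with $t_1,t_2$ at break-points of $f$ (it holds in particular once the vertices of $U$ carry unit demands, so that $f$ has slopes $\pm1$ and every integer parameter is a break-point).
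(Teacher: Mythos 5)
Your construction is correct, and it is the intended mechanism: the paper (an extended abstract) states this lemma without proof, but your identification of the reflection with swapping, for the terminals in the reflected stretch, the side of $\bar U\setminus\{uv\}$ from which they are reached is exactly the device the authors use later when they build $\mathcal Q^1$ by ``reversing the orientation'' of arrival at every terminal of $U$. Your bookkeeping is sound: the anchored-subpath structure of the intersections with $\Pi$ follows from $U$ being a component of $G-\{u,v\}$ (the degree-three hypothesis is not even needed for this part), the formula $n_{e_i}=q+\sum_{j>i}\ell_j+\sum_{j\le i}\rho_j$ is right, and the identity $\sum_{w_j\in B}(\rho_j-\ell_j)=0$ both makes the swapped routing match the reflected values and shows, as a pleasant by-product, that the reflected values are automatically nonnegative.

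One caveat, which you flag yourself but which deserves emphasis because of how the lemma is used: the statement allows arbitrary integers $t_1,t_2$, and in the smoothing step the reflection is performed at crossings of the $k/2$-line, which need not be break-points $s_i$ when some $b_{w_j}\ge 2$. Your proof covers only break-points. Note, however, that for an integer $t_1$ interior to a segment on which $f$ has nonzero slope, the literally reflected graph has a kink inside that segment and hence cannot coincide with the (segment-wise affine) $n$-function of any routing; so the literal statement is itself too strong there, and what is actually needed is that the reflected values at the points $s_i$ are realized by a routing. Your encoding delivers this too, with a small extension: at the one or two vertices whose segments straddle $t_1$ or $t_2$, swap only part of the pair $(\ell_j,\rho_j)$, i.e.\ choose new nonnegative $\ell_j'+\rho_j'=b_{w_j}$ so that the increment $\rho_j'-\ell_j'$ equals the reflected increment (this is an integer of the correct parity and of absolute value at most $b_{w_j}$ when the slopes are $\pm1$, as they are after taming). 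With that remark added, your argument proves everything the paper's smoothing procedure requires.
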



\begin{wrapfigure}{4}{0mm}
  \scalebox{.33}{\input{unwobble.pstex_t}}
\end{wrapfigure}
The figure on the right shows how this lemma is used to reduce the wobbling.  A valley where the function is above $k/2$ can be removed by reflecting as shown in
the picture.  Note that the $y$-function of the old routing is dominated by the $y$-function of the new routing.  Similarly, a peak below $k/2$ can be removed.
If $t_1$ and $t_2$ are two crossing, and there is no crossing between these two, then by reflection, we can obtain a routing whose $n$-function has fewer
crossings, and whose $y$-function dominates the $y$-function of the routing we started with.  In this manner, we can \textit{smooth} the $y$-function to arrive at
the following result.

\begin{lemma}
  After smoothing, we have an $n$-function which has at most one crossing, at most one peak, and at most one valley.  At the peak, the $n$-function must be
  above $k/2$, at the valley, it must be below $k/2$.
\end{lemma}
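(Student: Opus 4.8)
The plan is to spell out \emph{smoothing} as the iteration of three reflection steps and then to check that the process stops at a routing of the required shape. First a reduction: as in the paragraphs preceding Section~\ref{ssec:examine-meddle} we may assume that every edge of $G$ is used by $\mathcal P$; and, applying the Taming Lemma~\ref{lem:taming} repeatedly, we may assume that for every terminal $t\in U$ all $b_t$ paths of $\mathcal P$ ending at $t$ coincide. As every interior vertex $w_i$ of the path $\bar U\setminus\{uv\}$ has degree two, these $b_{w_i}$ paths all enter $\bar U$ through the same vertex of $\{u,v\}$, whence $n_{e_i}-n_{e_{i-1}}=\pm b_{w_i}$ for each $i$; thus the $n$-function $f=f^{\mathcal P}$ has all its slopes in $\{-1,+1\}$, exactly as in the proof of Lemma~\ref{lem-cycle}. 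Moreover, since every edge is used while the $b_r\ge 1$ trivial root--root paths cover no edge, we have $1\le f\le k-1$ throughout.

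Now \emph{smoothing} means: repeatedly apply, as long as one of them applies, the operations
\begin{enumerate}
\item[(a)] if $f$ has a valley at $v$ with $f(v)\ge k/2$, reflect the graph of $f$ on $[v-1,v+1]$ at the line $y=f(v)+1$, turning the valley into a peak of height $f(v)+2$ --- this is $>k/2$, and it is $\le k$ because a valley of height $k-1$ would have neighbours of height $k>k-1$;
\item[(b)] if $f$ has a peak at $v$ with $f(v)\le k/2$, reflect on $[v-1,v+1]$ at $y=f(v)-1$, turning the peak into a valley of height $f(v)-2$ --- this is $<k/2$, and it is $\ge 0$ because a peak of height $1$ would have neighbours of height $0<1$;
\item[(c)] if $f$ has two or more crossings, pick consecutive crossings $c_1<c_2$; assuming $f>k/2$ on $(c_1,c_2)$ (the other case being symmetric), reflect $f$ on the smallest integral interval containing $(c_1,c_2)$ at the horizontal line through its endpoints --- explicitly on $[c_1,c_2]$ at $y=k/2$ when $k$ is even and on $[c_1-\tfrac12,c_2+\tfrac12]$ at $y=\tfrac{k-1}{2}$ when $k$ is odd --- after which $f\le k/2$ on that whole interval, so both crossings vanish.
\end{enumerate}
Because the slopes are $\pm1$, in each case the abscissae involved are integers and the two ordinates agree, so each operation is an instance of the preceding lemma and produces the $n$-function of an actual routing; moreover the reflection line always passes through the two endpoints of the reflected interval, so $f(0)$ and $f(s_m)$ are preserved. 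A short calculation shows the associated $y$-function never increases: in (a) and (b) the reflected arc lies on one side of the $k/2$-line and is pushed strictly farther from it in the interior, while in (c) the reflection either leaves $p(f)=\min\{f,k-f\}$ unchanged (when $k$ is even) or decreases it (when $k$ is odd). Hence the $y$-vector of the smoothed routing is dominated by $y(\mathcal P)$, as the induction step requires. (An operation may well create new peaks or valleys of the forbidden type near the endpoints of the reflected interval; this does no harm.)

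Termination then follows at once: operations (a) and (b) strictly decrease $\int_0^{s_m}\bar f$ and leave the number of crossings unchanged, while operation (c) strictly decreases the number of crossings without increasing $\int_0^{s_m}\bar f$; since $f$ has slopes $\pm1$ with breakpoints in a fixed finite set of half-integers, both quantities lie in discrete, lower-bounded sets, so only finitely many operations can be performed. When smoothing halts, $f$ has no valley of height $\ge k/2$, no peak of height $\le k/2$, and at most one crossing, and I claim this already yields the statement. Indeed, since all slopes are $\pm1$, every interior point where $f=k/2$ is either a crossing or a local extremum of height exactly $k/2$, and the latter no longer occurs; hence if there is a (unique) crossing $c$ then $f$ stays strictly on one side of $k/2$ throughout $(0,c)$ and throughout $(c,s_m)$. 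On the side where $f<k/2$ there is no peak, so --- as there is a peak strictly between any two valleys --- there is at most one valley, and it lies below $k/2$; symmetrically, on the side where $f>k/2$ there is at most one peak, lying above $k/2$. If there is no crossing at all, the same argument applies with a single side. This is precisely the asserted shape.

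The part I expect to be the real work is making the termination bookkeeping and the admissibility of the three operations simultaneously airtight --- i.e.\ verifying that the reflections can always be carried out at integral abscissae, that they never drive $f$ outside $[0,k]$, and that each genuinely destroys the feature it targets. This is exactly where the two standing reductions pull their weight: taming forces the slopes to be $\pm1$, which fixes the abscissae and ordinates of all the reflections, and the bounds $1\le f\le k-1$ rule out the ``sawtooth against the ceiling or floor'' configurations in which operation (a) or (b) could not be performed.
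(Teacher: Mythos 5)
Your proposal is correct and takes essentially the same route the paper sketches: reduce (via taming) to slopes $\pm 1$, then iterate the paper's reflection lemma to destroy valleys at height $\ge k/2$, peaks at height $\le k/2$, and pairs of consecutive crossings, with termination controlled by the number of crossings and $\int \bar f$, and the final shape read off from the halting conditions. One caution on your closing remark: the monotonicity you verify concerns only the $y$-function on $\bar U\setminus\{uv\}$ (the reflections do change $n_{uv}$, so no inequality on the top edge follows from this lemma -- the paper treats that edge by a separate computation later), and note that you use ``dominated by'' in the direction opposite to the paper's convention, although your intended inequality is the right one.
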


\subsection{Identify routings omitting edges and establish a convex combination of their $y$-vectors}\label{ssec:cvxcomb}

The following procedure is at the heart of our proof.  Similarly to what we did in our proof of Lemma~\ref{lem-cycle}, we establish path routings $\mathcal Q^1$
and $\mathcal Q^2$ omitting edges $g^1$ and $g^2$ of $\bar U$ respectively and identify coefficients $\lambda^1$ and $\lambda^2$ for the $y$-vectors $y(\mathcal
Q^1)$ and $y(\mathcal Q^2)$ such that $y(\mathcal P)$ is dominated by $\lambda^1 y(\mathcal Q^1) + \lambda^2 y(\mathcal Q^2)$.  This completes the proof.

We now describe how these routings, edges, and coefficients are identified in the case when $f$ has a crossing (the case without crossing is easier).  The proof
that these selections make sense and do the job is grossly beyond the limit of this extended abstract.  After smoothing the $y$-function $f=f^{\mathcal P}$ looks
as drawn in the left picture of Fig.~\ref{fig:sine-antisine}.  The dashed line is the part were $\bar f$ differs from $f$.  If $f$ has a peak, then we denote the
point by $s_j$ and if it has a valley, we denote it by~$s_{j'}$.

\begin{figure}[ht]
  \centering
  \scalebox{.62}{\input{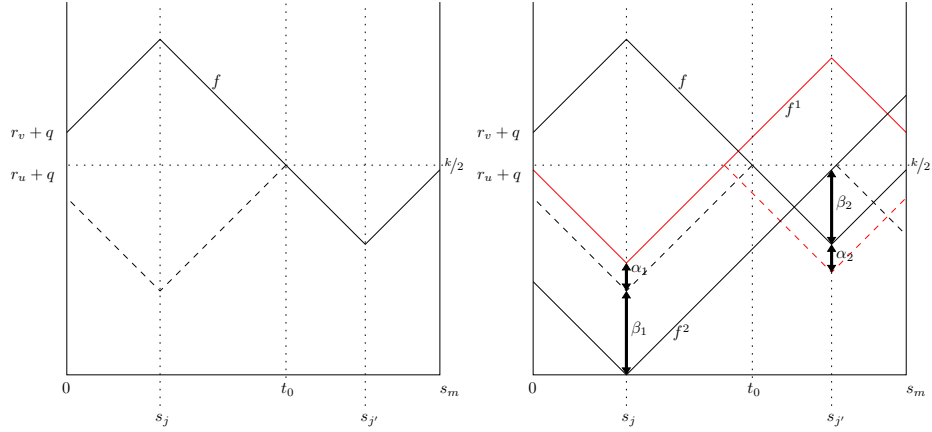}}
  \caption{Left: Smoothed solution. Right: Routings $\mathcal Q_i$ and coefficients}
  \label{fig:sine-antisine}
\end{figure}

A path ending at a terminal vertex might either arrive there in a clockwise or in an anti-clockwise motion.  We first construct a routing $\mathcal Q^1$ by
reversing this orientation for every terminal vertex in $U$.  $\mathcal Q^1$ clearly does not use the top edge $uv$, because we are in the Case~A', where every
path of $\mathcal P$ uses this edge.  The graph of the $n$-function $f^1:=f^{\mathcal Q^1}$ of $\mathcal Q^1$ can be obtained by reflecting the graph of $f$ at the
horizontal line through $(r_u+r_v)/2 + q$.  The right picture in Fig.~\ref{fig:sine-antisine} shows $f^1$ and $\bar f^1 := \bar f^{\mathcal Q^1}$.  Depending on
whether $k/2$ is above or below $(r_u+r_v)/2 + q$, $\bar f^1$ lies above $\bar f$ in $s_j$ and below $\bar f$ in $s_{j'}$ or vice versa.  Let us assume the former,
as in Fig.~\ref{fig:sine-antisine}.  For the other point $s_j$, we produce a routing which does not use the edge $e_j$, but coincides with $\mathcal P$ on every
edge not in $\mathcal P$.  Such a routing is uniquely determined except on the top edge $uv$, where for a certain number of paths, we may be able to choose whether
they use $uv$ or not (the respective values of $n_{uv}(\cdot)$ are easy to compute).  We take the routing $\mathcal Q^2$ which uses $uv$ as sparingly as possible.
Let $f^2:=f^{\mathcal Q^2}$, and $\bar f^2:=\bar f^{\mathcal Q^2}$.  Using the values $\alpha_i$ and $\beta_i$ as sketched in the figure, the coefficients for the
convex combination of the $y$-vectors are now
\begin{align*}
  \lambda^1 &:= \frac{\alpha_1}{\alpha_1 + \beta_1} & \lambda^2 &:= \frac{\beta_1}{\alpha_1 + \beta_1}.
\end{align*}
We then have $\bar f(s_j) = \lambda^1 \bar f^1(s_j) + \lambda^2 \bar f^2(s_j)$.  Clearly, we also have
\begin{equation}\label{eq:fs_j}
  \bar f(s_{j'}) = \frac{\alpha_2}{\alpha_2 + \beta_2} \,\bar f^1(s_{j'}) \;+\; \frac{\beta_2}{\alpha_2 + \beta_2} \,\bar f^2(s_{j'}),
\end{equation}
although this affine combination may not be a convex combination (moreover, there are cases when $\alpha_2+\beta_2\le 0$).  But it is possible to show that
\begin{align*}
  \alpha_2 &= \alpha_1 &\text{and}&& \beta_2 &\le \beta_1,
\end{align*}
and that the right hand side of equation~\eqref{eq:fs_j} is a decreasing function in $\beta_2$.  So we obtain $\bar f(s_{j'}) \ge \lambda^1 \bar f^1(s_{j'}) +
\lambda^2 \bar f^2(s_{j'})$.  The inequality in the other points in $[0,s_m]$ now follows using the fact that $\bar f$ is concave on the relevant intervals.

A computation shows that $y_{uv}(\mathcal P) \ge \lambda^1 y_{uv}(\mathcal Q^1) + \lambda^2 y_{uv}(\mathcal Q^2)$ also holds.

This completes our sketch of the proof of the Pyramidal Routing conjecture for ladders.


\bibliographystyle{plain}
\bibliography{outerplanar}

\end{document}